\def\Q{\mathbb Q}
\def\N{\mathbb N}
\def\R{\mathbb R}
\newcommand*\bigcdot{\mathpalette\bigcdot@{.5}}
\newcommand*\bigcdot@[2]{\mathbin{\vcenter{\hbox{\scalebox{#2}{$\m@th#1\bullet$}}}}}
\newcommand{\dotp}{\boldsymbol{.}}
\title{Midy's \! Theorem \! in \! non-integer \! bases \!
and \! divisibility \! of \! Fibonacci \! numbers}
\author{Zuzana Mas\'akov\'a and Edita Pelantov\'a}
\abstract{Fractions $\frac{p}{q} \in [0,1)$ with prime denominator $q$ written in decimal have a curious property described by Midy's Theorem, namely that two halves of their period (if it is of even length $2n$) sum up to $10^n-1$.  A number of results generalise Midy's theorem to expansions of  $\frac{p}{q}$ in different integer bases, considering non-prime denominators, or dividing the period into more than two parts. We show that a similar phenomena can be studied even in the context of numeration systems with non-integer bases, as introduced by Rényi. First we define the Midy property for a general real base $\beta >1$ and derive a necessary condition for validity of the Midy property. For $\beta =\frac12(1+\sqrt5)$ we characterize prime denominators $q$, which satisfy the property.}
\keywords{Midy theorem, $\beta$-expansions, Fibonacci numbers.}
\begin{document}



\section{Introduction}

The number $\frac{3}{7}$ in decimal system has a purely periodic expansion, namely $0\dotp(428571)^\omega$. 
Note that the first half of the period $428$ and the second half $571$ sum up to $999$. Similar behaviour appears with the fraction $\frac{18}{19}=0\dotp(947368421052631578)^\omega$. 
The sum of the two halves of the period is  $947368421+ 
052631578 = 999999999$. Here we always consider the period of minimal length.

According to Dickson~\cite{Dickson66}, this phenomenon for fractions of the form $\frac{1}{q}$ with prime denominator was observed experimentally already by Goodwyn in 1802~\cite{Goodwyn1802}. 

The first proof of this fact was probably given 
 in 1836 by a French college mathematics professor {\'E}tienne Midy in his privately published treatise on the properties of numbers and periodic decimal fractions~\cite{Midy1836}. Nowadays, under the name Midy's theorem, one usually finds the following result, although in Midy's text, one can actually find methods to show stronger properties of decimal fractions. 

\begin{theorem} Let  $q>5$ be a prime number. If a rational number  $\frac{p}{q} \in (0,1) $ has the minimal period of even length then the sum of the first and the second half of the period is a number whose expansion in the decimal system uses only the digit $9$. 
\end{theorem}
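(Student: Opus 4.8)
The plan is to reduce the statement to a short computation with the closed form of a purely periodic decimal, invoking primality of $q$ exactly once.

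First I would recall the standard facts about decimal expansions of fractions: since $q>5$ is prime we have $\gcd(q,10)=1$, so $\frac pq$ is purely periodic and its minimal period length $\ell$ equals the multiplicative order $\mathrm{ord}_q(10)$; moreover $0<p<q$ gives $\gcd(p,q)=1$. Assume $\ell=2n$ and write the period as the digit block $a_1a_2\cdots a_{2n}$. Summing the geometric series yields
$$\frac pq=\frac{N}{10^{2n}-1},\qquad N=\sum_{i=1}^{2n}a_i\,10^{2n-i}.$$
Let $A=\sum_{i=1}^{n}a_i\,10^{n-i}$ and $B=\sum_{i=1}^{n}a_{n+i}\,10^{n-i}$ be the integers spelt by the first and second half of the period, so $N=A\cdot 10^n+B$ with $0\le A,B\le 10^n-1$. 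The goal becomes $A+B=10^n-1$.

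Now I would bring in the arithmetic. From $\mathrm{ord}_q(10)=2n$ we get $10^{2n}\equiv 1\pmod q$ but $10^n\not\equiv 1\pmod q$; since $q$ is prime it divides $(10^n-1)(10^n+1)$ but not $10^n-1$, hence $q\mid 10^n+1$. Set $k=\frac{10^n+1}{q}\in\N$. Clearing denominators in the displayed identity and factoring $10^{2n}-1=(10^n-1)(10^n+1)$ gives
$$N=pk\,(10^n-1)=(pk-1)\,10^n+(10^n-pk).$$
From $1\le p\le q-1$ and $k\ge 1$ one has $1\le pk\le 10^n+1-k\le 10^n$, so $pk-1$ and $10^n-pk$ both lie in $\{0,1,\dots,10^n-1\}$. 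By uniqueness of the base-$10^n$ digits of $N$ this forces $A=pk-1$ and $B=10^n-pk$, hence $A+B=10^n-1$, the integer whose decimal expansion is $n$ copies of the digit $9$.

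The only genuinely delicate point is the bounding step $1\le pk\le 10^n$: it is what guarantees that $pk-1$ and $10^n-pk$ really are the two halves of the period (equivalently, that adding the two halves produces no carry out of the top digit), and this is where the hypotheses $0<p<q$ and the even period length enter. The single use of primality is the passage from $q\mid(10^n-1)(10^n+1)$ to $q\mid 10^n+1$; this is precisely the implication that can fail for composite $q$, which explains why the theorem is stated for prime denominators.
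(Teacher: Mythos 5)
Your proof is correct: the closed form $\frac pq=\frac{N}{10^{2n}-1}$, the deduction $q\mid 10^n+1$ from primality and the exactness of the order $2n$, the rewriting $N=pk(10^n-1)=(pk-1)10^n+(10^n-pk)$, and the bound $1\le pk\le 10^n$ ensuring that these really are the two base-$10^n$ digits of $N$ are all sound, and you correctly isolate the unique use of primality. The paper itself states this classical theorem without proof, but its general machinery, specialized to $\beta=10$, gives a different packaging of the same arithmetic core: Lemma~\ref{lem:IFF} shows that the Midy property is equivalent to the dynamical statement $T^n(\tfrac pq)=\tfrac{q-p}q$, i.e.\ $\tfrac pq+T^n(\tfrac pq)=1$, and Theorem~\ref{t:Nutne} translates this into the congruence $C^N\equiv -I\bmod q$ for the companion matrix --- which for base $10$ is the $1\times1$ matrix $(10)$, so the condition reads $10^n\equiv-1\bmod q$, precisely your fact $q\mid 10^n+1$. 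What your direct computation buys is a self-contained elementary proof that never leaves the integers and makes the ``no carry out of the top digit'' issue explicit through the inequality $pk\le 10^n$; what the paper's route buys is that the identity $z+T^n(z)=\frac{x+y}{\beta^n-1}$ and the matrix congruence make sense for any algebraic base $\beta>1$, which is exactly what permits the generalization to $\tau=\frac12(1+\sqrt5)$ that is the subject of the paper. Your bounding step has no analogue in the dynamical formulation, where it is replaced by the observation that $T^n(\tfrac pq)$ automatically lies in $(0,1)$; both devices serve the same purpose of pinning down the integer ambiguity.
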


One can hardly cherish expectations of some theoretical consequences of this theorem, yet alone a down to earth application. Nevertheless, this result can serve for the general public as an illustration that mathematics can simply be fun. Proofs and generalisations of this theorem have been for decades a source of amusement of many, both mathematical amateurs and professionals. A historical survey can be found in~\cite{Shrader-Frechette78} and later~\cite{Ross2010}.

A nice presentation of Midy's theorem using group-theoretical proofs is given by Leavitt~\cite{Leavitt67}, who calls the phenomenon the nines-property and gives a criterion to decide about the parity of the period-length of the fraction $\frac{p}{q}$ in the decimal system in terms of quadratic residues. 
Leavitt also shows a sufficient condition for a fraction with non-prime denominator to have the nines-property.



A number of authors have focused on generalisations of Midy's theorem to considering fractions with non-prime denominators,
cutting the period of the fraction into more than two blocks of equal length, and translating the problem into non-decimal number systems with integer base $b \in \N$, see e.g.~\cite{Ginsburg2004,Gupta2005,Lewittes2007,Martin2007}. 

The aim of this contribution is to give a first glimpse to similar phenomena that appear when looking at fractions in numeration systems with non-integer base.
In 1957 A.~R\'enyi~\cite{Renyi57} introduced positional systems where the role of base is played by any real $\beta >1$. A representation of a given positive number $x$ in the form $x =\sum_{k= -\infty}^N x_k\beta^k$, with $x_k \in \N$, is found by the greedy algorithm and is called the $\beta$-expansion of $x$. 
The greedy algorithm produces digits in the set
 $\mathcal{D}=\{0,1, \ldots, \lceil{\beta\rceil}-1\}$. In case that  the base $\beta$ is not an integer, some combinations of digits do not appear in the $\beta$-expansion of any positive number $x$. One can characterize the strings of digits admissible for $\beta$-expansions using lexicographic comparison with the so-called quasigreedy $\beta$-expansion of the number $1$, usually denoted by $d_\beta^*(1) = t^*_1t^*_2 t^*_3\cdots$, see~\cite{Parry60}. The  string $d_\beta^*(1)$ is composed of digits over the set  $\mathcal{D}$, and it is the lexicographically largest string with 
 infinitely many non-zero digits such that  $1 = \sum_{k=1}^{+\infty}t_k^* \beta^{-k}$. For example, if $\beta=10$, then $d_\beta^*(1) = 9^\omega$.

Our special attention is given to the  numeration system with the golden ratio base $\tau = \frac{1+\sqrt{5}}{2}\approx 1\dotp 618$.  The digits in this system are only  $0$ and $1$ and the base satisfies $\tau^2 = \tau +1$.   Consequently, the quasigreedy $\tau$-expansion of $1$ is $d_\tau^*(1) = (10)^\omega$.  

It is known~\cite{Schmidt80} that every rational number  $\frac{p}{q} \in (0,1)$ has a purely periodic $\tau$-expansion. For example, $\frac{3}{7}$ has the $\tau$-expansion  $0\dotp(0100001001010010)^\omega$, whose period-length is equal to 16. The first half of the period $01000010$ represents the number $x_1 = \tau^6 + \tau$, whereas the second half $01010010$  gives $x_2=\tau^6 + \tau^4 +\tau$. Using $\tau^{k+2}= \tau^{k+1}+\tau^k$, we easily derive that $x_1+x_2 =  \tau^7 + \tau^5 +\tau^3+\tau = \tau^8-1$ and hence, the $\tau$-expansion of the sum is equal to $10101010$. 
Note that this string is a prefix of the quasigreedy expansion $d_\tau^*(1)$. 
In case of the classical decimal system, the sum of the two halves of the period is a prefix of the string $9^\omega$, which is the quasigreedy expansion of unity for the decimal base. 

In both the presented examples for the bases $\beta = 10$ and $\beta = \tau$ it holds  that if a period of a fraction is of even length, say $2n$, then the sum of the two halves has the value $\beta^n-1$. 

This observation suggests how the `nines-property' given for the decimal system can be extended to systems with arbitrary real base $\beta >1$, see Definition~\ref{d:MidyProp}. In Section~\ref{sec:nutna} we show necessary condition so that a fraction with denominator $q$ has the Midy property in a base $\beta >1$. In Section~\ref{sec:postacujici} we study sufficient conditions for the golden ratio base $\tau$. With the use of divisibility properties of Fibonacci numbers, we characterize the prime denominators $q\in \mathbb{N}$, for which an analogy of  Midy's theorem holds in base $\tau$, see Section~\ref{sec:primes}.

\section{Preliminaries}\label{sec:preliminaries}

Given a real number $\beta>1$, one can obtain the $\beta$-expansion of a positive real number $x$ by the greedy algorithm: Find $k$ such that 
$\beta^k\leq x < \beta^{k+1}$, $r_k=x$, and for $i\leq k$ repeat: $x_i:=\lfloor r_i/{\beta^i}\rfloor$, $r_{i-1}:=x-x_i\beta^i$. Then 
$$
x=\sum_{i\leq k} x_i\beta^i, \quad x_j\in
{\mathcal D}:=\{k\in \N: k< \beta\},
$$
and for every $j\leq k$, 
we have $\sum_{i\leq j}x_i\beta^i < \beta^{j+1}$. 
For the $\beta$-expansion of $x$, we write
$$
(x)_{\beta} = \begin{cases}
x_kx_{k-1}\cdots x_0\dotp x_{-1}x_{-2}\cdots &\text{ if }k\geq 0,\\
0\dotp 0^{-k-1}x_{k}x_{k-1}\cdots & \text{ if }k<0.
\end{cases}
$$
In case $x\in(0,1)$, the $\beta$-expansion of $x$ can be defined by the  $\beta$-transformation given by $T_\beta:[0,1]\to[0,1)$, $T_\beta(x)=\beta x - \lfloor\beta x\rfloor$, setting
$(x)_\beta = 0\dotp x_1x_2x_3\cdots$ where $x_i=\lfloor\beta T^{i-1}(x)\rfloor$. Note that for every $n\in\N$, we have
$$
T^n(x)=0\dotp x_{n+1}x_{n+2}x_{n+3}\cdots = \big(x-\sum_{k=1}^nx_k\beta^{-k}\big)\beta^n.
$$
In general, not all combinations of digits in ${\mathcal D}$ appear in a $\beta$-expansion. The sequences of digits which are admissible as $\beta$-expansions are described by the lexicographic condition, using the so-called quasigreedy expansion of 1, denoted $d_\beta^*(1)=t_1^*t_2^*t_3^*\cdots$, defined by
$\lim_{x\to1-} (x)_\beta= 0\dotp t_1^*t_2^*t_3^*\cdots$, where the limit is considered in the product topology. The theorem by Parry~\cite{Parry60} then says that $0\dotp x_1x_2x_3\cdots$ with $x_i\in\N$, is a $\beta$-expansion of some $x\in(0,1)$ if and only if for every $i\geq 1$, we have $x_ix_{i+1}x_{i+2}\cdots\prec d_\beta^*(1)$, where $\preceq$ stands for standard lexicographic order on strings.

The so-called $\beta$-integers are real numbers whose $\beta$-expansion has no non-zero digits to the right of the fractional point, 
$$
{\mathbb Z}_\beta=\{x\in \R : \big(|x|\big)_\beta= x_nx_{n-1}\cdots x_1x_0\dotp 0^\omega\}.
$$

\begin{example}
Let $\tau=\frac12(1+\sqrt5)\approx 1\dotp 618$ be the golden ratio.   
By the greedy algorithm, we can calculate
the $\tau$-expansion of 2. We have $\tau^1\leq 2<\tau^2$, thus $k=1$,
$$
\begin{array}{ll}
r_1=2, & x_1=\lfloor 2/\tau^1 \rfloor = 1,\\[1mm]
r_0=2-\tau,  & x_0=\lfloor (2-\tau)/\tau^0 \rfloor = 0,\\[1mm]
r_{-1}=2-\tau, & x_{-1} = \lfloor (2-\tau)/\tau^{-1}\rfloor = 0,\\[1mm]
r_{-2}=2-\tau, & x_{-2} = \lfloor (2-\tau)/\tau^{-2}\rfloor = 1.
\end{array}
$$
Since $r_{-3}=2-\tau-\tau^{-2}=0$, we have $x_i=0$ for every $i\leq -3$ and $(2)_\tau=10\dotp 010^\omega$, where by $0^\omega$ we mean infinite repetition of the digit 0. As usual in the decimal system, we can omit the suffix 
$0^\omega$.

Let us now compute the $\tau$-expansion of $1/2$ using the $\tau$-transformation. We have
$$
\begin{array}{ll}
x_1=\lfloor \frac\tau2 \rfloor=0, &
T_\tau(1/2) = \frac\tau2 - \lfloor \frac\tau2 \rfloor = \frac\tau2,\\[2mm]
x_1=\lfloor \frac{\tau^2}2 \rfloor=1, &
T^2_\tau(1/2) = \frac{\tau^2}2 - \lfloor \frac{\tau^2}2 \rfloor =\frac1{2\tau},\\[2mm]
x_2=\lfloor \frac12 \rfloor = 0, & T^3_\tau(1/2) = \frac12. 
\end{array}
$$
Since $T^3_\tau(1/2)=T^0_\tau(1/2)$, we have $T^{n+3}_\tau(1/2)=T^n_\tau(1/2)$, and thus $(1/2)_\tau=0\dotp (010)^\omega$.
Similarly, one can obtain the purely periodic $\tau$-expansion of $\frac37$ as it was mentioned in the introduction,
$(3/7)_\tau=0\dotp (0100001001010010)^\omega$.

Note that for all $(2)_\tau$, $(1/2)_\tau$, and $(3/7)_\tau$, the string of digits does not contain two consecutive digits equal to 1. This is not a coincidence. For, the quasigreedy expansion of 1 satisfies $d_\tau^*(1)=(10)^\omega$. The Parry lexicographic condition says that  a $\beta$-expansion has only digits in the set $\{0,1\}$, does not contain the string $11$ and does not end with the tail $(01)^\omega$. 

With this in hand, we can find the first few non-negative
$\tau$-integers. Their $\tau$-expansions are 
$$
0,\ 1,\ 10,\ 100,\ 101,\ 1000,\ 1001,\ 1010,\ 10000,\ \dots
$$
They have values
$$
0,\ 1,\ \tau,\ \tau^2,\ \tau^2+1,\ \tau^3,\ \tau^3+1,\ \tau^3+\tau,\ \tau^4,\ \dots.
$$
\end{example}


\begin{definition} \label{d:MidyProp}
Let $\beta >1$.  We say that $q \in \mathbb{N}$ {\it  has the Midy property in base $\beta$}, if there exists a positive integer $p <q$ coprime with $q$ such that 

\noindent
$\bullet$
the $\beta$-expansion of $\frac{p}{q}$ is purely periodic 
$\bigl(\frac{p}{q}\bigr)_\beta =0\dotp (c_1c_2\cdots c_{2n})^\omega$ where $2n$ is the length of the shortest period; and

\noindent
$\bullet$  
$x+y = \beta^n-1$, where $x,y$ are $\beta$-integers with 
$\beta$-expansions $(x)_\beta=c_1c_2\cdots c_{n}$ and $(y)_\beta = c_{n+1}c_{n+2}\cdots c_{2n}$,  respectively. 

\noindent
The number $p$ is then said to {\it testify}
to the Midy property of $q$ in base $\beta$.
\end{definition}

From the above given examples, we see that in base $\tau$ the number $q=7$ has the Midy property whereas the number  $q=2$ has not. For, the only fraction $\frac{p}{q}$ in the interval $(0,1)$ with denominator $q=2$ is $\frac12$ with the $\tau$-expansion $(\frac12)_\tau=0\dotp(010)^\omega$ of odd length.

\begin{remark}
Note that directly from the definition it follows that if an integer $q$ has the Midy property in base $\beta$, then the base is an algebraic integer. Indeed, we have
$$
\beta^n-1 = x+y \quad \text{ where } x=\sum_{i=1}^nc_i\beta^{n-i} \text{ and } y=\sum_{i=1}^nc_{n+i}\beta^{n-i}\,,
$$
which shows that $\beta$ is a root of a monic polynomial with integer coefficients.
\end{remark}

Our aim is to search for bases in which infinitely many fractions $\frac{p}{q}$ satisfy the Midy property. The crucial point is to have infinitely many positive fractions $\tfrac{p}{q}<1$, with purely periodic expansion. Pure periodicity in non-integer bases was studied already by Schmidt~\cite{Schmidt80}, later by Hama and Imahashi~\cite{HI97}, Akiyama~\cite{A98}, Adamczewski et al.~\cite{AFSS10} and others.
Let us summarize the results. 

Denote $\gamma(\beta)$ supremum of real numbers $\gamma$ such that every $x\in[0,\gamma)\cap\Q$ has a purely periodic $\beta$-expansion.
Based on the results of Schmidt~\cite{Schmidt80}, Akiyama~\cite{A98} has shown that if $\gamma(\beta)>0$, then $\beta$ is a Pisot unit. 
Recall that an algebraic integer $\beta=\beta^{(1)}>1$ is a Pisot number of degree $d$, if its minimal polynomial $f\in{\mathbb Z}[X]$ is of degree $d$, and the other roots $\beta^{(i)}$, $i=2,\dots,d$ of $f$, called the algebraic conjugates of $\beta$, are in modulus smaller than $1$.  
The number $\beta$ is an algebraic unit, if its norm, $N(\beta)=\prod_{i=1}^d\beta^{(i)}$ is equal to $\pm1$.
There are two classes of quadratic Pisot units, namely the roots $\beta>1$ of the polynomials
\begin{eqnarray}
X^2-mX-1,&\quad m\geq 1,\label{eq:quadrPisot-1}\\
X^2-mX+1,&\quad m\geq 3.\label{eq:quadrPisot+1}
\end{eqnarray}

From Schmidt~\cite{Schmidt80}, it follows that if $\beta$ is a root of~\eqref{eq:quadrPisot-1}, then $\gamma(\beta)=1$, i.e.\ every fraction in the interval $(0,1)$ has a purely periodic $\beta$-expansion. On the other hand, for roots of~\eqref{eq:quadrPisot+1}, it is shown in~\cite{HI97} that no fraction has a purely periodic $\beta$-expansion, and hence $\gamma(\beta) = 0$.

Let
$f(X) = X^d - c_{d-1}X^{d-1} - c_{d-2}X^{d-2} - \cdots - c_1X-c_0\in \mathbb{Z}[X]$ be the minimal polynomial of $\beta$, the companion matrix of $\beta$ is defined as
$$
C =\left(\begin{array}{ccccc}
     0&0& \cdots &0 & c_0\\
   1&0&\cdots &0&c_1\\
   \vdots &&\ddots&& \vdots\\
   0&0& \cdots & 0 &c_{d-2}\\
   0&0& \cdots &1&c_{d-1}
\end{array}\right)
$$
The spectrum of $C$ is formed by the roots of $f$, in particular, the determinant of $C$ is equal to the norm of $\beta$, $\det C= N(\beta)$. Denote $\mathbf{v} = (1, \beta, \beta^2, \ldots, \beta^{d-1})^T$. Then $\mathbf{v}$ is a left eigenvector of $C$ corresponding to the eigenvalue $\beta$,
$$
\mathbf{v}^TC = \beta\mathbf{v}^T.
$$

\begin{example}
The minimal polynomial of the golden ratio $\tau = \frac12(1+\sqrt{5})$ is given by $f(X)=X^2-X-1$, the companion matrix of $\tau$ is $C = \left(\begin{smallmatrix}
    0&1\\
    1&1
\end{smallmatrix}\right)$. The algebraic conjugate of $\tau$ is $\tau'=\frac12(1-\sqrt5)=-\frac1\tau\sim-0.618$, and thus $\tau$ is a Pisot number. 
\end{example}

\section{Necessary  condition}\label{sec:nutna}

\begin{lemma}\label{lem:IFF} 
Let $q \in \N, q>2$ and $\beta >1$. Then $q$ satisfies the  Midy property for $\beta$ if and only if there exists $p\in \N$, $0<p<q$, $p$ coprime with $q$ and  $N \in \N$ such that 
\begin{equation}\label{eq:doplnek}
T^N(\tfrac{p}{q}) = \tfrac{q-p}{q} \quad \text{and} \quad T^N(\tfrac{q-p}{q}) = \tfrac{p}{q}\,.
\end{equation}
\end{lemma}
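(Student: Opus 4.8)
The plan is to unwind both directions through the behaviour of the $\beta$-transformation $T = T_\beta$ on the purely periodic orbit of $\frac pq$. Recall from the Preliminaries that if $(\frac pq)_\beta = 0\dotp(c_1c_2\cdots c_{2n})^\omega$ with minimal period $2n$, then the shift on the digit string corresponds to one application of $T$, and more precisely $T^j(\frac pq) = 0\dotp(c_{j+1}c_{j+2}\cdots c_{j+2n})^\omega$ for every $j\geq 0$. In particular $T^{2n}(\frac pq) = \frac pq$ and the $2n$ iterates $\frac pq, T(\frac pq),\dots,T^{2n-1}(\frac pq)$ are all distinct rationals with denominator $q$.

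For the forward direction, suppose $q$ has the Midy property, witnessed by $p$. Write $x = \sum_{i=1}^n c_i\beta^{n-i}$ and $y = \sum_{i=1}^n c_{n+i}\beta^{n-i}$, so $x,y$ are $\beta$-integers and $x+y = \beta^n - 1$ by the definition. I would compute $T^n(\frac pq)$ directly from the formula $T^n(\frac pq) = \bigl(\frac pq - \sum_{k=1}^n c_k\beta^{-k}\bigr)\beta^n = \frac pq\,\beta^n - x$, and, using periodicity, $T^n(\frac pq) = 0\dotp(c_{n+1}\cdots c_{2n}c_1\cdots c_n)^\omega$, which by the same formula applied to this fraction equals $\frac pq\,\beta^n - x$ as well — consistency only, so instead the point is that $T^{2n}(\frac pq) = \frac pq$ gives, after two steps of length $n$, a linear relation. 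Concretely, set $u = T^n(\frac pq)$; then applying $T^n$ once more, $T^n(u) = u\beta^n - y' $ where $y'$ is the $\beta$-integer read off from the digits $c_{n+1}\cdots c_{2n}$ of $u$, i.e.\ $y' = y$; and $T^n(u) = T^{2n}(\frac pq) = \frac pq$. So $\frac pq = u\beta^n - y$ and $u = \frac pq\beta^n - x$. Eliminating and using $x+y = \beta^n-1$ yields $(\frac pq + u)(\beta^n - 1) = (x+y)(\beta^n-1)$, hence $\frac pq + u = x+y+1 = \beta^n$ — wait, more cleanly: adding the two displayed equations gives $\frac pq + u = (\frac pq + u)\beta^n - (x+y) = (\frac pq+u)\beta^n - \beta^n + 1$, so $(\frac pq + u - 1)(\beta^n - 1) = 0$, and since $\beta>1$ we get $u = 1 - \frac pq = \frac{q-p}{q}$. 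Thus $T^n(\frac pq) = \frac{q-p}{q}$; applying $T^n$ to this and using $T^{2n}(\frac pq) = \frac pq$ gives $T^n(\frac{q-p}{q}) = \frac pq$. This establishes \eqref{eq:doplnek} with $N = n$. (One must also note $p$ is coprime with $q$ and $0<p<q$, which is part of the Midy hypothesis, and observe that $\frac pq\neq\frac{q-p}{q}$ so that $2n$ can indeed be the minimal even period — this uses $q>2$.)

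For the converse, assume \eqref{eq:doplnek} holds for some $p$ coprime with $q$, $0<p<q$, and some $N\in\N$. First, $T^{2N}(\frac pq) = T^N(\frac{q-p}{q}) = \frac pq$, so $\frac pq$ has a purely periodic $\beta$-expansion whose period length $L$ divides $2N$. Since $q>2$ and $p$ is coprime with $q$ we have $\frac pq\neq\frac{q-p}{q}$ (else $2p=q$, contradicting $\gcd(p,q)=1$ unless $q=2$), hence $N$ is not a period, so $L\nmid N$; combined with $L\mid 2N$ this forces $L = 2m$ to be even with $m\mid N$ but $2m\nmid N$, and in particular the minimal period has even length $2m$. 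Write $(\frac pq)_\beta = 0\dotp(c_1\cdots c_{2m})^\omega$ and let $x,y$ be the $\beta$-integers with expansions $c_1\cdots c_m$ and $c_{m+1}\cdots c_{2m}$; I would then show $T^m(\frac pq) = \frac{q-p}{q}$. The subtlety is that \eqref{eq:doplnek} is phrased with $N$, not $m$: from $L = 2m$ and the orbit structure, $T^m(\frac pq) = 0\dotp(c_{m+1}\cdots c_{2m}c_1\cdots c_m)^\omega$, and I must argue this equals $\frac{q-p}{q}$. This follows because $N = (2k+1)m$ for some $k$ (as $m\mid N$ but $2m\nmid N$), so $T^N = (T^{2m})^k\circ T^m = T^m$ on the orbit of $\frac pq$; hence $\frac{q-p}{q} = T^N(\frac pq) = T^m(\frac pq)$. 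Finally, exactly as in the forward direction, from $T^m(\frac pq) = \frac{q-p}{q}$ and $T^m(\frac{q-p}{q}) = \frac pq$ I extract $\frac pq = \frac pq\beta^m - x$ giving... rather: $\frac{q-p}{q} = \frac pq\beta^m - x$ and $\frac pq = \frac{q-p}{q}\beta^m - y$; adding yields $1 = \beta^m - (x+y)$, i.e.\ $x+y = \beta^m - 1$, which is precisely the Midy property for $q$ testified by $p$.

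The main obstacle is the bookkeeping in the converse connecting the arbitrary $N$ of \eqref{eq:doplnek} to the half-period $m$: one must verify that minimality of the period is consistent with \eqref{eq:doplnek} (that $T^N$ restricted to the orbit coincides with $T^m$, which rests on the divisibility relations among $L=2m$, $N$, and the fact that $N$ is not itself a period), and handle the degenerate possibility $p = q-p$ using $q>2$. Everything else is a short linear computation with the identity $T^j(\frac pq) = \frac pq\beta^j - (\text{integer part read from the first } j \text{ digits})$, combined with the defining relation $x+y=\beta^m-1$.
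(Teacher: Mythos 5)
Your proof is correct and follows essentially the same route as the paper: the same linear identity relating $\tfrac pq$, $T^n(\tfrac pq)$ and $x+y$ in the forward direction, and the same reduction of the given $N$ to half the minimal period (via $L\mid 2N$, $L\nmid N$, hence $L$ even and $T^N=T^{L/2}$ on the orbit) in the converse. The only blemish is the throwaway remark that the iterates $T^j(\tfrac pq)$ are ``rationals with denominator $q$'' --- false for general real $\beta>1$ --- but it is never used in the argument.
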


\begin{proof} 
Suppose that a number $z\in(0,1)$ has purely periodic expansion with period of even length $2n$, say
$\bigl(z\bigr)_\beta =0\dotp(c_1c_2\cdots c_{2n})^\omega$, i.e. 
$$
\bigl(T^n ( z )\bigr)_\beta = 0\dotp (c_{n+1}c_{n+2} \cdots c_{2n}c_1c_2\cdots c_{n})^\omega\qquad \text{and} \qquad T^{2n} ( z ) =z,
$$
This can be written using the $\beta$-integers
$x :=c_1\beta^{n-1} + c_2\beta^{n-2}+\cdots + c_{n-1}\beta + c_n$  and   $y :=c_{n+1}\beta^{n-1} + c_{n+2}\beta^{n-2}+\cdots + c_{2n-1}\beta + c_{2n}$, as
$$
z = \frac{x\beta^n + y}{\beta^{2n}-1}  \quad \text{and}  \quad T^n ( z ) =  \frac{y\beta^n + x}{\beta^{2n}-1}.
$$
We derive
\begin{equation}\label{eq:x+y}
z+T^n(z) = \frac{(x+y)(\beta^n+1)}{\beta^{2n}-1} = 
\frac{x+y}{\beta^{n}-1}.    
\end{equation}

Let $q$ satisfy the Midy property in base $\beta$. This means that we have~\eqref{eq:x+y} for some $z=\frac{p}{q}$ where $p$ is coprime with $q$, and, moreover, $x+y=\beta^n-1$. Equation~\eqref{eq:x+y} gives $\tfrac{p}{q} +  T^n ( \tfrac{p}{q} ) = 1$. Hence, $T^n ( \tfrac{p}{q} ) =  \tfrac{q-p}{q}$ and   $T^n ( \tfrac{q-p}{q} )=T^{2n} ( \tfrac{p}{q} ) = \tfrac{p}{q}$. It suffices to set $N=n$. 

For the opposite implication, assume that Equation~\eqref{eq:doplnek} is satisfied for some $N\in\N$ and 
$p\in\{1,2,\dots,q-1\}$ coprime with $q$. 
Denote $d$ the shortest period of the $\beta$-expansion of $\frac{p}{q}$. Obviously, $d$ divides $2N$. If $d$ divides $N$, then $\frac{p}{q}=T^N(\frac{p}{q})=\frac{q-p}{q}$, which happens only for $\frac{p}{q}=\frac12$. Since $q>2$ and $p$ is coprime with $q$, this is not possible.

Thus $2N=kd$ for some odd $k$, i.e.\ the minimal period-length $d$ is even. 
We have $N=kd/2 = (k-1)d + d/2$.
Substituting into~\eqref{eq:doplnek}, we obtain
$$
\tfrac{q-p}{q}=T^N\big(\tfrac{p}{q}\big) = T^{d/2}\Big(T^{(k-1)d}\big(\tfrac{p}{q}\big)\Big) = T^{d/2}\big(\tfrac{p}{q}\big),
$$
and consequently also
$$
T^{d/2}\big(\tfrac{q-p}{q}\big)= 
T^{d/2}\Big(T^{d/2}\big(\tfrac{p}{q}\big)\Big) = T^{d}\big(\tfrac{p}{q}\big) = \tfrac{p}{q}.
$$
Therefore~\eqref{eq:doplnek} is satisfied also for $N=d/2$.

Therefore the $\beta$-expansion of $z=\frac{p}{q}$ is purely periodic of length $d$. Thus~\eqref{eq:x+y} is satisfied with $n=d/2$. Combining with~\eqref{eq:x+y}, we derive 
$z+T^{n}(z)=1 = \frac{x+y}{\beta^{n}-1}$, whence $x+y=\beta^n-1$. 
This concludes the proof.
\end{proof}

\begin{theorem}\label{t:Nutne} Let $C\in \mathbb{Z}^{d\times d}$  be the companion matrix of an algebraic integer $\beta >1$ of degree $d$.  
If $q\in \N$, $q > 2$, has the Midy property in base $\beta$, then there exists a positive integer  $N$ such that $C^N \equiv -I \mod q$.
\end{theorem}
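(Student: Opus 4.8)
The plan is to translate the two functional equations of Lemma~\ref{lem:IFF} into linear-algebra statements over $\mathbb{Z}/q\mathbb{Z}$ using the companion matrix $C$, and then read off $C^N\equiv -I$. By Lemma~\ref{lem:IFF} there exist $p$ coprime with $q$ and $N\in\N$ with $T^N(\tfrac pq)=\tfrac{q-p}{q}$ and $T^N(\tfrac{q-p}{q})=\tfrac pq$; in particular $T^{2N}(\tfrac pq)=\tfrac pq$. I would first recall that applying $T$ to a number $z\in[0,1)$ amounts to $z\mapsto\beta z-x_1$ with $x_1=\lfloor\beta z\rfloor\in\D$, so after $N$ steps we get an identity of the form $T^N(z)=\beta^N z-P_N(\beta)$ where $P_N$ is a polynomial with integer coefficients (the "digit" part accumulated along the orbit). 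Combining the two equations of~\eqref{eq:doplnek} gives, for $z=\tfrac pq$,
\begin{equation}\label{eq:plan-key}
\beta^N\cdot\tfrac pq - P_N(\beta) = \tfrac{q-p}{q}\,,
\end{equation}
and similarly $\beta^N\cdot\tfrac{q-p}{q}-\widetilde P_N(\beta)=\tfrac pq$ for another integer polynomial $\widetilde P_N$.

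Next I would pass to the vector $\mathbf{v}=(1,\beta,\dots,\beta^{d-1})^T$, which satisfies $\mathbf{v}^TC=\beta\mathbf{v}^T$ and hence $\mathbf{v}^TC^N=\beta^N\mathbf{v}^T$. Multiplying~\eqref{eq:plan-key} through by $q$ yields $p\,\beta^N + (2p-q) = q\,P_N(\beta)$, i.e. an integer-coefficient polynomial relation in $\beta$. Because $\beta$ has degree $d$, such a relation need not have the coefficient-wise form I want, so instead I reduce each power $\beta^k$ appearing in $P_N$ via the minimal polynomial; the clean way to encode this is that for any integer polynomial $g$ one has $g(\beta)\,\mathbf{v}^T=\mathbf{v}^T g(C)$ and $g(\beta)=0$ iff $g(C)=0$ (since $f$ is the minimal polynomial and $C$ has characteristic polynomial $f$). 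Applying this to $g(X)=pX^N+(2p-q)-qP_N(X)$ gives $p\,C^N+(2p-q)I-qP_N(C)=0$ as a matrix identity over $\mathbb Z$ — wait, it gives $g(C)=0$ only if $f\mid g$; what is actually true is that $g(\beta)=0$ forces $f\mid g$ in $\mathbb Q[X]$, and since $f$ is monic with integer coefficients the division is over $\mathbb Z[X]$, so indeed $g(C)=0$. Reducing this identity modulo $q$ kills the $qP_N(C)$ term and leaves $p\,C^N+2pI\equiv 0\pmod q$. Since $p$ is invertible mod $q$, multiply by $p^{-1}$ to get
$$
C^N+2I\equiv 0 \pmod q,
$$
which is not quite $-I$; I need to recheck the constant. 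Recomputing: $\beta^N\cdot\tfrac pq-\tfrac{q-p}{q}=P_N(\beta)$ gives $p\beta^N-(q-p)=qP_N(\beta)$, so $g(X)=pX^N-(q-p)-qP_N(X)$, and modulo $q$ this is $pC^N-(-p)=pC^N+p\equiv p(C^N+I)$... so $C^N\equiv -I\pmod q$ after all — the sign of the constant term is $-(q-p)\equiv p\pmod q$. Good; this is the desired conclusion.

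The main obstacle I anticipate is the passage from a polynomial identity in $\beta$ to a matrix identity modulo $q$ — i.e.\ justifying rigorously that $g(\beta)=0$ with $g\in\mathbb Z[X]$ implies $g(C)=0$ over $\mathbb Z$, not just over $\mathbb Q$, and then that the "digit polynomial'' $P_N$ genuinely has \emph{integer} coefficients (which it does, being a sum of integer digits times powers of $\beta$, but one must be careful that $T^N$ is well-defined on these rationals, i.e.\ that the orbit never hits an ambiguous point — this is fine since $\tfrac pq$ has a purely periodic, hence well-defined, expansion). A secondary point worth spelling out is that $N$ here can be taken to be the same $N$ from Lemma~\ref{lem:IFF}; one need not optimize it. Once these are in place, the computation above is routine: express $\beta^N$-action via $C^N$ on the eigenvector $\mathbf v$, clear denominators, reduce mod $q$, and cancel the unit $p$.
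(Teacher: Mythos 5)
Your argument is correct, and it reaches the conclusion by a route that differs in a substantive way from the paper's. The paper encodes each $x\in\tfrac1q\mathbb{Z}[\beta]$ as an integer vector $\mathbf{a}(x)$ with $x=\tfrac1q\mathbf{v}^T\mathbf{a}(x)$, shows that one step of $T$ acts as $\mathbf{a}(T(x))\equiv C\mathbf{a}(x)\bmod q$, and from $T^N(\tfrac pq)=\tfrac{q-p}{q}$ deduces only the single-column congruence $C^N\mathbf{e}_1\equiv-\mathbf{e}_1\bmod q$; it then needs a separate induction (the identity $C^n\mathbf{e}_i=C^{n+1}\mathbf{e}_{i-1}$) to propagate this to the full matrix congruence $C^N\equiv-I$. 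You instead write $T^N(z)=\beta^Nz-P_N(\beta)$ with $P_N\in\mathbb{Z}[X]$ the digit polynomial, obtain $g(\beta)=0$ for $g(X)=pX^N-(q-p)-qP_N(X)\in\mathbb{Z}[X]$, and use that $f\mid g$ in $\mathbb{Z}[X]$ (possible because $f$ is monic) together with $f(C)=0$ (Cayley--Hamilton for the companion matrix) to get the matrix identity $g(C)=0$ in one stroke; reducing modulo $q$ and cancelling the unit $p$ yields $C^N\equiv-I$ with no column-by-column induction. Both proofs rest on the same underlying fact --- $T$ is multiplication by $\beta$ up to subtraction of an integer digit, hence by $C$ modulo $q$ --- but your polynomial formulation buys the full matrix congruence immediately, at the small cost of invoking integrality of division by a monic polynomial and Cayley--Hamilton, both of which you correctly identify and justify. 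One cosmetic point for a final write-up: delete the intermediate miscomputation $p\beta^N+(2p-q)=qP_N(\beta)$ and keep only the corrected relation $p\beta^N-(q-p)=qP_N(\beta)$, since the self-correction, while it lands on the right answer, should not survive into the finished proof.
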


\begin{proof}  
Denote $\mathbf{v} = (1, \beta, \beta^2, \ldots, \beta^{d-1})^T$.  
If $x \in \frac{1}{q}\mathbb{Z}[\beta]$, then there exists a unique integer vector $ \mathbf{a}(x)= (a_0, a_1, \ldots, a_{d-1})^T \in \mathbb{Z}^d$ such that
$$
x = \tfrac{1}{q}  \ \mathbf{v}^T \mathbf{a}(x). 
$$
Let  $d=\lfloor \beta x\rfloor$ be the first digit in the $\beta$-expansion of $x$. Then 
\begin{equation}\label{eq:vectorModulo}
T(x) = \beta x - d = \tfrac{1}{q}  \bigl( \mathbf{v}^T C\mathbf{a}(x) - qd  \bigr)  = \tfrac{1}{q}   \mathbf{v}^T \bigl(C\mathbf{a}(x) - qd \mathbf{e}_1 \bigr) \in \tfrac{1}{q}\mathbb{Z}[\beta], 
\end{equation}
where by $\mathbf{e}_i \in \mathbb{R^d}$ we mean the $i^{th}$ column of the identity matrix $I \in \mathbb{R}^{d\times d}$, in other words,  $\mathbf{e}_i$ denotes the $i^{th}$ vector of the canonical base of the vector space $\mathbb{R}^d$. 

From~\eqref{eq:vectorModulo}, we see that the set  $\frac{1}{q}\mathbb{Z}[\beta] \cap [0,1)$  is closed under the transformation $T$.  
Equation \eqref{eq:vectorModulo} implies that
\begin{equation}\label{eq:vectorProT}
\mathbf{a}(T(x)) = C\mathbf{a}(x) - qd \mathbf{e}_1  \equiv C\mathbf{a}(x)  \mod q.
\end{equation} 

 Clearly, $\mathbf{a}(\tfrac{p}{q}) = p\mathbf{e}_1$ and $\mathbf{a}(\tfrac{q-p}{q}) =(q- p)\mathbf{e}_1$.  
Applying~\eqref{eq:vectorProT} to
Equation~\eqref{eq:doplnek} we obtain 
$$
\mathbf{a}\Bigl(T^N\bigl(\tfrac{p}{q}\bigr)\Bigr) \equiv C^N\mathbf{a}\bigl(\tfrac{p}{q}\bigr) \equiv C^N p\mathbf{e}_1 \equiv - p\mathbf{e}_1 \mod q. 
$$
Since $p$ and $q$ are coprime, we have derived 
$C^N \mathbf{e}_1 \equiv - \mathbf{e}_1 \mod q.$ 
In order to finish the proof, we need to verify that
$C^N \mathbf{e}_i \equiv - \mathbf{e}_i \mod q$  
holds for all vectors $\mathbf{e}_i$, $i =2,3, \ldots, d$.   For this purpose, we show by induction the following claim:

\medskip
\centerline{\emph{For any $n \in \N$ and any $i = 2,\ldots, d$ one has $C^n \mathbf{e}_{i} = C^{n+1}\mathbf{e}_{i-1}$.}}
\medskip

Indeed, if $n=0$, then the claim can be checked directly  from the definition of the companion matrix $C$.  Assume that the statement is valid for $n \in \N$. Then with the use of the induction hypothesis, we have $C^{n+1}\mathbf{e}_{i} = C C^n\mathbf{e}_{i} = C C^{n+1} \mathbf{e}_{i-1} = C^{n+2}\mathbf{e}_{i-1}$, i.e.\ the statement is valid for $n+1$, as well.  

\medskip

Combining the claim with the relation  $C^N\mathbf{e}_1\equiv -\mathbf{e}_1 \!\!\mod q$  we obtain 
\[
C^N\mathbf{e}_2 \equiv C^{N+1}\mathbf{e}_1 \equiv C C^N \mathbf{e}_1 \equiv C(-\mathbf{e}_1) \equiv -\mathbf{e}_2 \!\!\mod q.
\]
We proceed analogously to show 
$C^N\mathbf{e}_i \equiv - \mathbf{e}_i \!\!\mod q$, for all $i$. This proves that $C^N\equiv -I\!\!\mod q$. 
\end{proof}

\begin{remark}\label{rem:Tribonacci} 
If $\beta >1$ is an algebraic  number of an odd degree $d$ and $\beta$ has norm $N(\beta)=1$, then no integer $q$ satisfies the Midy property in base $\beta$. For, $N(\beta)= 1 = \det C =  \det C^N $ and $\det (-I) = (-1)^d=-1$, the equality  $C^N \equiv -I \mod q$  cannot hold true.  In particular, no $q>2$ satisfies the Midy property in the Tribonacci base $\beta$ -  the positive root of the polynomial $X^3-X^2-X-1$. 
\end{remark}

\begin{remark} 
Let us mention that the necessary condition given in Theorem~\ref{t:Nutne} is not sufficient. As  counterexample consider $\beta>1$, the quadratic Pisot number with minimal polynomial $X^2-3X+1$. The companion matrix 
    $C=\left( \begin{smallmatrix}  0  &-1  \\
    1 & 3  \end{smallmatrix} \right)$ satisfies for $q=5$ that $C^5 \equiv - I \mod q$. 
    Nevertheless, it is known that no rational number in $(0,1)$ has a purely periodic $\beta$-expansion, and thus  $q=5$ does not satisfy the Midy property.
\end{remark}

\section{Sufficient condition for the base $\tau = \frac{1+\sqrt{5}}{2} $}
\label{sec:postacujici}

Our aim is to show that the necessary condition derived in Theorem \ref{t:Nutne} for an integer $q$ to satisfy the Midy property in base $\beta>1$ is also sufficient in case of $\beta$ being the golden ratio $\tau$.
Powers of the companion matrix $C$ of the golden ratio can be expressed using the well known Fibonacci sequence $(F_n)_{n\in\N}$, defined by 
$$
F_0=0, F_1=1, \text{ and } \ F_{n+2} = F_{n+1}+F_n \text{ for }n \in \N.
$$ 
It can be easily computed that for any positive exponent $N\in\N$, we have
$$
C^N = \left(\begin{smallmatrix}
    F_{N-1}& F_N\\
    F_{N}& F_{N+1}
\end{smallmatrix}\right).
$$

\begin{theorem}\label{t:staci} Let $C$ be the companion matrix of the golden ratio $\tau$, let $q, N \in \N$, $q>2$ and $N>1$. If $C^N \equiv -I \mod q$, then  
   $q$ has the Midy property in base $\tau$. Every $p\in \mathbb{N}$, $0<p<q$, testifies to the Midy property of $q$. 
\end{theorem}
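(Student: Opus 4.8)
I would deduce the statement from Lemma~\ref{lem:IFF}: it suffices to check that for \emph{every} $p\in\{1,\dots,q-1\}$ coprime with $q$ one has $T^N(\tfrac{p}{q})=\tfrac{q-p}{q}$ and $T^N(\tfrac{q-p}{q})=\tfrac{p}{q}$, i.e.\ \eqref{eq:doplnek} with the $N$ from the hypothesis. Since $\tau$ is the root $>1$ of~\eqref{eq:quadrPisot-1} with $m=1$, Schmidt's result~\cite{Schmidt80} quoted in Section~\ref{sec:preliminaries} guarantees that every rational in $(0,1)$ — hence $\tfrac{p}{q}$, $\tfrac{q-p}{q}$ and every iterate $T^j(\tfrac{p}{q})$ — has a purely periodic $\tau$-expansion. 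Moreover, by~\eqref{eq:vectorProT} the vector encoding $\mathbf a$ from the proof of Theorem~\ref{t:Nutne} satisfies $\mathbf a(T(x))\equiv C\mathbf a(x)\bmod q$ on $\tfrac{1}{q}\mathbb Z[\tau]\cap[0,1)$, so iterating and using $\mathbf a(\tfrac{p}{q})=p\mathbf e_1$ and $C^N\equiv-I\bmod q$ gives
\[
\mathbf a\bigl(T^N(\tfrac{p}{q})\bigr)\equiv -p\,\mathbf e_1\equiv (q-p)\,\mathbf e_1=\mathbf a\bigl(\tfrac{q-p}{q}\bigr)\pmod q ,
\]
and symmetrically $\mathbf a(T^N(\tfrac{q-p}{q}))\equiv\mathbf a(\tfrac{p}{q})\bmod q$.

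\textbf{An injectivity lemma.} Everything therefore reduces to the following: if $u,w\in\tfrac{1}{q}\mathbb Z[\tau]\cap[0,1)$ both have purely periodic $\tau$-expansions and $\mathbf a(u)\equiv\mathbf a(w)\bmod q$, then $u=w$. Writing $u-w=k_0+k_1\tau$ with $k_0,k_1\in\mathbb Z$ (legitimate since $\mathbf a(u)-\mathbf a(w)\in q\mathbb Z^2$), I would pass to the Galois conjugate, exploiting $|\tau'|=\tau^{-1}<1$. The quantitative core is the claim that the conjugate of a purely periodic $\tau$-point lies in the open interval $(-\tau,1)$: for $z=0\dotp(d_1\cdots d_\ell)^\omega\in[0,1)$, unrolling the conjugated transformation $\xi\mapsto\tau'\xi-d$ once around the period yields $z'=-\sum_{k\ge1}(\tau')^{k-1}e_k$, where $(e_k)_{k\ge1}=(d_\ell d_{\ell-1}\cdots d_1)^\omega$ is the reversed period word repeated; this reversed string is again $\tau$-admissible (absence of the factor $11$ is reversal-invariant, while the exceptional words $(01)^\omega$, $(10)^\omega$ are ruled out because neither is a greedy $\tau$-expansion of a point of $[0,1)$). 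Grouping $\sum_{k\ge1}(\tau')^{k-1}e_k$ into consecutive pairs and using $(\tau')^{2j}>0$ together with $e_{2j+1}+\tau' e_{2j+2}\in\{0,1,\tau'\}$ then gives the strict bounds $-1<\sum_{k\ge1}(\tau')^{k-1}e_k<\tau$, i.e.\ $z'\in(-\tau,1)$.

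\textbf{Finishing and concluding.} With $u-w\in(-1,1)$ and $u',w'\in(-\tau,1)$ I would close the injectivity lemma by a short case analysis, using $\tau+1=\tau^2$, $\tau+\tau'=1$ and $\tau-\tau'=\sqrt5$. From $k_1\sqrt5=(u-w)-(u'-w')$, $|u-w|<1$ and $|u'-w'|<\tau^2$ one gets $|k_1|\le1$; for $k_1=0$ we are left with $u-w=k_0\in\mathbb Z\cap(-1,1)=\{0\}$. For $k_1=1$ the constraint $u-w\in(-1,1)$ forces $k_0\in\{-2,-1\}$: if $k_0=-2$ then $u'-w'=\tau'-2=-\tau^2$, impossible because $u'>-\tau$ and $w'<1$ force $u'-w'>-\tau^2$; if $k_0=-1$ then $u-w=\tau-1=\tau^{-1}$, so $u=w+\tau^{-1}$ with $w\in[0,\tau^{-2})$ and $u\in[\tau^{-1},1)$, the leading digits of $w$ and of $u$ are $0$ and $1$ respectively, and $T(u)-T(w)=\tau(u-w)-1=0$; thus $T(u)=T(w)$, but then $u$ and $w$ lie in one common finite $T$-orbit on which $T$ acts as a cyclic permutation, forcing $u=w$ and contradicting $u-w=\tau^{-1}$. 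The case $k_1=-1$ is symmetric under $u\leftrightarrow w$. This proves the lemma. Applying it to the pairs $\bigl(T^N(\tfrac{p}{q}),\tfrac{q-p}{q}\bigr)$ and $\bigl(T^N(\tfrac{q-p}{q}),\tfrac{p}{q}\bigr)$ — both consisting of purely periodic points of $\tfrac{1}{q}\mathbb Z[\tau]\cap[0,1)$ by the first paragraph — yields \eqref{eq:doplnek}, whence Lemma~\ref{lem:IFF} shows that $q$ has the Midy property in base $\tau$ and that $p$ testifies; as $p$ was an arbitrary residue coprime to $q$, every such $p$ testifies.

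\textbf{Main obstacle.} The genuinely nontrivial steps should be (i) pinning down the \emph{sharp} interval $(-\tau,1)$ for conjugates of purely periodic points — the crude contraction estimate only gives roughly $|z'|<\tau^3$, which is too weak for the case analysis, so the pairing trick (equivalently, an explicit description of the natural-extension domain of the golden-ratio $\beta$-transformation) is really needed; and (ii) the sub-case $k_0=-1$, $k_1=1$, where the interval bound alone does \emph{not} yield a contradiction and one must notice the collapse $T(u)=T(w)$.
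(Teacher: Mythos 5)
Your proposal is correct, but it reaches the conclusion by a genuinely different route than the paper. The paper's proof is a direct computation: it expands $T^N(\tfrac{p}{q})$ over the digit string $c_1\cdots c_N$, uses the identity $\tau^k=(\tau+\tfrac1\tau)F_k+(-\tfrac1\tau)^k$ to split the result into an integer multiple $A=pF_N-q\sum c_{N-k}F_k$ of $q$ plus a small remainder, shows $A=0$ by bounding the alternating sum $B=\sum c_{N-k}(-\tfrac1\tau)^k$ via the no-$11$ condition (with a separate sub-case when $c_N=1$, using $T^N(\tfrac pq)<\tfrac1\tau$), and then reads off $T^N(\tfrac pq)=1-\tfrac pq$ from $\tau^k=F_k\tau+F_{k-1}$. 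You instead isolate a rigidity lemma --- two purely periodic points of $\tfrac1q\mathbb{Z}[\tau]\cap[0,1)$ with congruent coordinate vectors $\bmod\ q$ coincide --- proved via the sharp conjugate bound $z'\in(-\tau,1)$ (the natural-extension domain) and a short case analysis on $u-w=k_0+k_1\tau$, and then let the congruence $\mathbf a(T^N(\tfrac pq))\equiv(q-p)\mathbf e_1$ from the machinery of Theorem~\ref{t:Nutne} do the rest. The two arguments are cousins: your bound on $z'=-\sum(\tau')^{k-1}e_k$ and the paper's bound on $B$ both rest on pairing digits of an admissible string and excluding the extremal words $(10)^\omega,(01)^\omega$. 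I checked the delicate points of your version and they hold: the strictness of $-1<\sum(\tau')^{k-1}e_k<\tau$ is correctly justified, the estimate $|k_1|\le1$ follows from $|u-w|<1$ and $|u'-w'|<\tau^2$ since $2\sqrt5>1+\tau^2$, and the residual sub-case $k_0=-1$, $k_1=1$ (where the interval bound is inconclusive) is correctly killed by observing $T(u)=T(w)$ and that $T^{mn}$ fixes both points, forcing $u=w$. Your approach buys a reusable statement (periodic points are determined by their residue vector, valid for all $p$ simultaneously and plausibly portable to other quadratic Pisot units of norm $-1$), at the cost of needing the sharp conjugate interval; the paper's computation is more self-contained and feeds directly into the Fibonacci divisibility arguments of the later sections.
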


\begin{proof} 
First we show that for any fraction  $x  \in \mathbb{Q} \cap (0,1)$ with denominator $q$ it holds that $T^N(x) = 1-x$. Then also $T^N(1-x) = 1-(1-x) = x$, hence $T^{2N}(x) = x$. By Lemma~\ref{lem:IFF}, this implies the statement.
 
Let $c_1, c_2, \ldots, c_N$ be the digits of the $\tau$-expansion of $x = \tfrac{p}{q}$ obtained by first $N$ iterations of the transformation $T$. Then 
\begin{equation}\label{eq:rozvojPQ}
(0,1)\ni T^N(\tfrac{p}{q}) =\left( \frac{p}{q} - \frac{c_1}{\tau} - \frac{c_2}{\tau^2} - \cdots - \frac{c_N}{\tau^N} \right)\tau^N = \frac{1}{q} \Bigl(p\tau^N - q\sum_{k=0}^{N-1}c_{N-k}\tau^k\Bigl).
\end{equation}
By induction, one can easily verify the following formula
connecting Fibonacci numbers and the golden mean,
\begin{equation}\label{eq:Binet}
\tau^k = \bigl(\tau+\tfrac{1}{\tau}\bigr)F_{k} +  \bigl(\tfrac{-1}{\tau}\bigr)^k, \text{ for }k\geq 0.    
\end{equation}
Substituting into~\eqref{eq:rozvojPQ}, one has
$$
T^N(\tfrac{p}{q}) = \tfrac{1}{q}\Bigr(\bigl(\tau+\tfrac{1}{\tau}\bigr) \bigl(\underbrace{ pF_N - q\sum_{k=0}^{N-1}c_{N-k}F_k}_{=:A}\bigr)  +  p\bigl(\tfrac{-1}{\tau}\bigr)^N - q  \underbrace{\sum_{k=0}^{N-1}c_{N-k}\bigl(\tfrac{-1}{\tau}\bigr)^k}_{=:B}\,\Bigr)
$$
The assumption $C^N \equiv -I \mod q$ implies that $F_N \equiv 0 \mod q$ and  $F_{N -1}\equiv -1 \mod q$.  Therefore $A \in \mathbb{Z}$ can be written in the form
$$
A=pF_N - q\sum_{k=0}^{N-1}c_{N-k}F_k =\ell q\quad\text{ for some }\ell\in  \mathbb{Z}.
$$ 
Consequently, we have the following estimate, 
\begin{equation}\label{eq:odhad}
T^N(\tfrac{p}{q}) = |T^N(\tfrac{p}{q})| > \bigl(\tau+\tfrac{1}{\tau}\bigr) |\ell| - \tfrac{1}{\tau^N}  -|B|.
\end{equation}

Recall that in the sequence of digits
$c_1, c_2, \ldots, c_N$, there are never two consecutive digits equal to 1. Therefore we have the estimate on $B=\sum_{k=0}^{N-1}c_{N-k}\bigl(\tfrac{-1}{\tau}\bigr)^k$,
$$
|B| \leq \left\{ \begin{array}{ll} \sum_{k=0}^\infty\tfrac{1}{\tau^{2k}} - \tfrac{1}{\tau^{N+1}}\sum_{k=0}^\infty\tfrac{1}{\tau^{2k}} =\tau - \frac{1}{\tau^{N}}, \quad & \text{if\ \ } c_N =1,\\ 
&\\
\sum_{k=0}^\infty\tfrac{1}{\tau^{2k+1}} = 1, \quad & \text{if\ \ } c_N =0.
\end{array}\right.
$$

We will now show that $A= \ell q = 0$. Assume that the opposite is true, i.e.\ $|\ell|\geq 1$. 

In case $c_N =0$, estimate~\eqref{eq:odhad} gives   $T^N(\tfrac{p}{q}) > \bigl(\tau+\tfrac{1}{\tau}\bigr) - \tfrac{1}{\tau^N} - 1 = \tfrac{2}{\tau} -\tfrac{1}{\tau^N} >1$, which is a contradiction.

If $c_N =1$, then the digit $c_{N+1}=0$, and thus $T^{N+1}(\tfrac{p}{q}) =\tau  T^N(\tfrac{p}{q}) <1$, i.e.\  $T^N(\tfrac{p}{q}) < \frac{1}{\tau}$.  If $\ell \neq 0$, we obtain  $T^N(\tfrac{p}{q}) > \bigl(\tau+\tfrac{1}{\tau}\bigr) - \tfrac{1}{\tau^N} - \tau + \tfrac{1}{\tau^N} =  \frac{1}{\tau}$, which is again a contradiction. 
Thus we have derived that $A =0$. 

Now we use another expression for the powers 
of the golden ratio using Fibonacci numbers, namely
\begin{equation}\label{eq:tau^k}
\tau^k=F_k\tau + F_{k-1},
\end{equation}
which holds for $k\geq 0$  defining $F_{-1}=1$.
We substitute for $\tau^k$ into~\eqref{eq:rozvojPQ}, to obtain
$$
T^N(\tfrac{p}{q}) = \tfrac{1}{q}\Bigr(\tau\bigl(\underbrace{ pF_N - q\sum_{k=0}^{N-1}c_{N-k}F_k}_{=A}\bigr)  + pF_{N-1} - q\underbrace{\sum_{k=0}^{N-1}c_{N-k}F_{k-1}}_{\in \mathbb{Z}}\Bigr).
$$
Since $A = 0$ and $F_{N-1} = -1 \mod q$, there exists $n\in \mathbb{Z}$ such that  
$$
T^N(\tfrac{p}{q}) = \tfrac{1}{q} \bigl(-p + n q  \bigr) = n -\tfrac{p}{q}
$$
Since both $T^N(\tfrac{p}{q}) \in (0,1)$  and $\frac{p}{q} \in (0, 1) $,  necessarily $n=1$ and thus $T^N(\tfrac{p}{q})= \tfrac{q-p}{p}$ as we wanted to show.
\end{proof}

\begin{remark} Let  $p, q\in \N$,\ $1\leq p< q$, $p$ coprime with $q$.  If $p$ testifies to the Midy property of $q$ in base $\tau$, then the minimal period $d$ of the $\tau$-expansion of $\frac{p}{q}$ is even and the equation $C^N \equiv -I \mod q$,  is satisfied also for $N = \frac{d}{2}$, see the proof of Lemma~\ref{lem:IFF}. In particular, $\det C^{d/2} = (-1)^{d/2} \equiv  \det(-I) \equiv  1 \mod q$. Hence $\frac{d}{2}$ is even, i.e.\ the minimal period $d$ is divisible by $4$. The sum of the two halves of the $\tau$-expansion of $\frac{p}{q}$ is equal to $\tau^{d/2}-1 = \sum_{k=1}^{d/4}\tau^{2k-1}$ and thus  its $\tau$-expansion is ${(10)^{d/4}}$, which is a prefix of $d_\tau^*(1)=(10)^\omega$.   
    
\end{remark}

\begin{example}\label{ex:q=3} 
The number
$q =3$ satisfies the Midy property in base $\tau$, as 
$$
C^4 = \left(\begin{smallmatrix}
    F_{3}& F_4\\
    F_{4}& F_{5}
\end{smallmatrix}\right) = \left(\begin{smallmatrix}
    2& 3\\
    3& 5
\end{smallmatrix}\right)\equiv -I \mod 3.
$$ 
Indeed,  $\bigl(\tfrac{1}{3}\bigr)_\tau = 0\dotp (00101000)^\omega$. The first half of the period  $0010$  represents the number $\tau$, the second half   $1000$ represents the number $\tau^3$. Their sum is 
\[
\tau^3 +\tau = (\tau^3 + \tau + 1) - 1 = (\tau^3 +\tau^2) - 1 = \tau^4-1.
\]
The $\tau$-expansion of the sum is the string $1010$ and it is a prefix of  $d^*_\tau(1) = (10)^\omega$. 

Similarly, $q =5$ satisfies Midy property in base $\tau$, as 
$$
C^{10} = \left(\begin{smallmatrix}
    F_{9}& F_{10}\\
    F_{10}& F_{11}
\end{smallmatrix}\right) = \left(\begin{smallmatrix}
    34& 55\\
    55& 89
\end{smallmatrix}\right)\equiv -I \mod 5.
$$ 
Indeed,  $\bigl(\tfrac{1}{5}\bigr)_\tau = 0\dotp (00010010101001001000)^\omega$. The first half of the period  $0001001010$  represents the number $\tau^6+\tau^3+\tau$, the second half   $1001001000$ represents the number $\tau^9+\tau^6+\tau^3$. Their sum is $\tau^9 +2\tau^6+2\tau^3+\tau = \tau^{10}-1$. The $\tau$-expansion of the sum is the string $1010101010$ and it is a prefix of  $d^*_\tau(1) = (10)^\omega$. 
\end{example}

\begin{corollary}\label{co:divisors} Let $q$ satisfy the Midy property in base $\tau$. If $d\in \N, d>2$, is a divisor of $q$, then $d$ has the Midy property in $\tau$ as well.  
\end{corollary}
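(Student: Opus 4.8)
\emph{Plan.} The statement is essentially a transitivity property: the Midy property in base $\tau$ has, by Theorems~\ref{t:Nutne} and~\ref{t:staci}, been shown to be \emph{equivalent} to the existence of a positive integer $N>1$ with $C^N\equiv -I\pmod q$, and this congruence obviously descends to divisors of $q$. So the plan is simply to chain the two theorems together via the divisor reduction.

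\emph{Key steps.} First, since $q$ has the Midy property in base $\tau$, Theorem~\ref{t:Nutne} (applied with $\beta=\tau$, whose companion matrix is $C$) produces a positive integer $N$ with $C^N\equiv -I\pmod q$. Because $d$ divides $q$, every congruence that holds modulo $q$ also holds modulo $d$, so $C^N\equiv -I\pmod d$. Second, one must verify $N>1$ in order to be allowed to invoke Theorem~\ref{t:staci}: here $C^1=\left(\begin{smallmatrix}F_0 & F_1\\ F_1 & F_2\end{smallmatrix}\right)=\left(\begin{smallmatrix}0 & 1\\ 1 & 1\end{smallmatrix}\right)$, and $C^1\equiv -I\pmod m$ would force $1\equiv 0\pmod m$, which is impossible for $m=q>2$; hence $N\geq 2$. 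Finally, since $d>2$, $N>1$, and $C^N\equiv -I\pmod d$, Theorem~\ref{t:staci} applies with $d$ playing the role of $q$ and yields that $d$ has the Midy property in base $\tau$ (indeed, every $p\in\N$ with $0<p<d$ coprime with $d$ testifies to it).

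\emph{Main obstacle.} There is essentially no obstacle: the only point needing a word of care is excluding the degenerate exponent $N=1$, which is disposed of directly as above. Everything else is the routine observation that reduction modulo $q$ refines to reduction modulo any divisor $d$ of $q$.
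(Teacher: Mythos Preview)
Your proof is correct and follows exactly the same route as the paper: apply Theorem~\ref{t:Nutne} to obtain $C^N\equiv -I\pmod q$, reduce modulo the divisor $d$, and conclude with Theorem~\ref{t:staci}. Your explicit exclusion of the degenerate case $N=1$ is a small refinement the paper omits (Theorem~\ref{t:staci} is stated for $N>1$), so your version is in fact slightly more careful.
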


\begin{proof} 
By Theorem \ref{t:Nutne}, there exists a positive integer $N$ such that $C^N \equiv -I \mod q$. As $d$ is divisor of $q$, necessarily  $C^N \equiv -I \mod d$. By Theorem \ref{t:staci},  $d$ has the Midy property in base $\tau$. 
\end{proof}

\begin{corollary}\label{cor:FibonacciLicheSude} 
Let  $q\in \N, q>2$. \begin{enumerate}
    \item If $q$ is a divisor of  $F_{2n-1}$ for some $n \in \N, n\geq 3$, then $q$ has the Midy property in~base $\tau$. 
    \item If $q$ is a multiple  of  $F_{2n}$ for some $n \in \N, n\geq 3$, then $q$ does not have the Midy property in base $\tau$. 
\end{enumerate}
\end{corollary}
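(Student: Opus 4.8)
The plan is to translate everything into the criterion $C^N\equiv -I\bmod q$ supplied by Theorems~\ref{t:Nutne} and~\ref{t:staci}, using the explicit form $C^N=\left(\begin{smallmatrix}F_{N-1}&F_N\\ F_N&F_{N+1}\end{smallmatrix}\right)$ and Cassini's identity $F_{m-1}F_{m+1}-F_m^2=(-1)^m$. The one elementary fact I would isolate first is: \emph{if an integer $r$ divides $F_m$, then $C^m\equiv F_{m-1}I\bmod r$} (because $F_{m+1}=F_m+F_{m-1}\equiv F_{m-1}$), and Cassini then gives $F_{m-1}^2\equiv(-1)^m\bmod r$.

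For part~(1), assume $q\mid F_{2n-1}$. The fact above with $m=2n-1$ and $r=q$ gives $C^{2n-1}\equiv F_{2n-2}I\bmod q$, hence $C^{4n-2}=(C^{2n-1})^2\equiv F_{2n-2}^2I\bmod q$; since $2n-1$ is odd, Cassini gives $F_{2n-2}^2\equiv-1\bmod q$, so $C^{4n-2}\equiv-I\bmod q$. As $q>2$ and $4n-2>1$ (because $n\geq 3$), Theorem~\ref{t:staci} yields that $q$ has the Midy property in base $\tau$, with every admissible $p$ testifying to it.

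For part~(2), assume $F_{2n}\mid q$ with $n\geq 3$ and, for contradiction, that $q$ has the Midy property. By Theorem~\ref{t:Nutne} there is a positive $N$ with $C^N\equiv -I\bmod q$, hence $C^N\equiv -I\bmod F_{2n}$. The fact above with $m=2n$, $r=F_{2n}$ gives $C^{2n}\equiv F_{2n-1}I\bmod F_{2n}$, and $F_{2n-1}$ is a unit mod $F_{2n}$ since Cassini with even index gives $F_{2n-1}^2\equiv1$. Write $N=2nk+s$ with $0\leq s<2n$; then $C^N=(C^{2n})^kC^s\equiv F_{2n-1}^kC^s$, so the congruence $C^N\equiv-I$ forces $C^s$ to be scalar mod $F_{2n}$, whence its off-diagonal entry $F_s$ is divisible by $F_{2n}$. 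As $0\leq F_s\leq F_{2n-1}<F_{2n}$ this gives $F_s=0$, i.e.\ $s=0$ and $2n\mid N$. Then $C^N\equiv F_{2n-1}^kI\bmod F_{2n}$, so one would need $F_{2n-1}^k\equiv-1\bmod F_{2n}$; but $F_{2n-1}^2\equiv1$ forces $F_{2n-1}^k\equiv\pm1$, and neither is $\equiv-1\bmod F_{2n}$: the value $1$ is ruled out by $F_{2n}\geq F_6=8>2$, and the value $F_{2n-1}$ because $0<F_{2n-1}+1<F_{2n}$, using $F_{2n}-F_{2n-1}=F_{2n-2}\geq F_4=3$ for $n\geq 3$. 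This contradiction completes part~(2).

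The step I expect to need most care is the reduction $s=0$ in part~(2): it is what lets one pass from ``$C^N$ is scalar'' to ``$C^N$ is a power of the scalar matrix $C^{2n}$'', and hence obtain a clean $\pm1$ dichotomy. I would also stress that the hypothesis $n\geq 3$ is genuinely needed: for $n=2$ one has $F_4=3$ and $F_3+1=3=F_4$, so the final size estimate fails, in accordance with Example~\ref{ex:q=3}, where $q=3$ does satisfy the Midy property.
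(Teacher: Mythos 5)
Your proof is correct and follows essentially the same route as the paper's: the criterion $C^N\equiv -I \bmod q$ from Theorems~\ref{t:Nutne} and~\ref{t:staci}, the scalar congruence $C^m\equiv F_{m\pm1}I \bmod F_m$, and Cassini's identity. The only (harmless) differences are that in part~(1) you work directly modulo $q$ rather than passing through $q=F_{2n-1}$ and Corollary~\ref{co:divisors}, and in part~(2) you derive $2n\mid N$ by an inline division-with-remainder argument where the paper simply cites the standard fact that $F_m\mid F_r$ if and only if $m\mid r$.
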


\begin{proof} In view of Corollary~\ref{co:divisors} it suffices to show that $q=F_n$, $n\geq 5$ satisfies the Midy property if and only if $n$ is odd. 

Note that  $\det C = -1$ and thus $\det C^k = F_{k-1} F_{k+1} - F_k^2 = (-1)^k$. This fact together with the recurrence for the Fibonacci sequence implies that
\begin{equation}\label{eq:determinantModulo}
F_{k+1}^2 \equiv (-1)^k \mod F_k \qquad \text{and}\qquad  
F_{k+1} \equiv F_{k-1} \mod F_{k}.
\end{equation} 
Thus
\begin{equation}\label{eq:MaticeModulo}
C^k = \left(\begin{smallmatrix}
    F_{k-1}& F_k\\
    F_{k}& F_{k+1}
\end{smallmatrix}\right)\equiv \left(\begin{smallmatrix}
    F_{k+1}& 0\\
    0& F_{k+1}
\end{smallmatrix}\right) \mod F_k.
\end{equation}

\begin{enumerate}
\item 
Let $q =F_{2n-1}$.  Put $N = 2(2n-1)$. Using \eqref{eq:determinantModulo}  and \eqref{eq:MaticeModulo} we deduce 
$$
C^{N} =C^{2n-1} C^{2n-1} \equiv \left(\begin{smallmatrix}
    F_{2n}^2& 0\\
    0& F_{2n}^2
\end{smallmatrix}\right)  \equiv  -I \mod F_{2n-1} \,.
$$
By Theorem \ref{t:staci}, the number $q=F_{2n-1}$ satisfies the Midy property. 

\item Let $q =F_{2n}$.   Assume for contradiction that $F_{2n}$ has the Midy property, i.e.\ by Theorem~\ref{t:Nutne} there exists $N \in \N, N>0$  such that 
\begin{equation}\label{eq:cislo}
C^N = \left(\begin{smallmatrix}
    F_{N-1}& F_{N}\\
    F_{N}& F_{N+1}
\end{smallmatrix}\right) \equiv  -I \mod F_{2n}.    
\end{equation}
In particular, $F_N\equiv 0\mod F_{2n}$, i.e.\ $F_{2n}$ is a divisor of $F_N$. From the well known fact that $F_m$ divides $F_r$ if and only if $m$ divides $r$ we derive that  $N =2n\ell$ for some $\ell \in \N$.  
Using \eqref{eq:determinantModulo}  and \eqref{eq:MaticeModulo} we obtain that   
$$
C^{2n} \equiv \left(\begin{smallmatrix}
    F_{2n-1}& 0\\
    0& F_{2n-1}
\end{smallmatrix}\right) \mod q \quad \text{and} \quad  C^{4n} \equiv \left(\begin{smallmatrix}
    F^2_{2n-1}& 0\\
    0& F^2_{2n-1}
\end{smallmatrix}\right) \equiv I \mod q 
$$
Thus $C^N = C^{2n\ell}\equiv I$ for $\ell$ even and $C^N \equiv  C^{2n}$ for $\ell$ odd. Therefore~\eqref{eq:cislo} implies that 
$$
C^N=C^{2n}\equiv
\left(\begin{smallmatrix}
    F_{2n-1}& 0\\
    0& F_{2n-1}
\end{smallmatrix}\right) \equiv -I \mod q,
$$ 
in particular, $F_{2n-1}\equiv -1 \mod F_{2n}$.

From the assumption, we have that $n \geq 3$, therefore $2\leq F_{2n-1} \leq  F_{2n}-2$, and hence $ F_{2n-1} \not\equiv \pm 1 \mod F_{2n}$. Consequently, $C^{2n}\not\equiv - I \mod q$. In summary, for every exponent $N$ we have  $C^N \not\equiv - I \mod q$.  Thus  $F_{2n}$ does not satisfy the Midy property.     
\end{enumerate}
\vspace{-1cm}
\end{proof}

In view of the sufficient condition given in Theorem~\ref{t:staci}, the knowledge of divisibility of Fibonacci numbers will be crucial. 
For a positive given $m\in\N$ denote $a(m)$ the smallest positive integer such that $m$ divides $F_{a(m)}$. The sequence $(a(m))_{m\geq 0}$ is registered in Sloane's On-Line Encyclopedia of Integer Sequences under the code A001177. 

It can be shown (e.g.~\cite{Halton1966}) that $m$ divides $F_n$ if and only if $a(m)$ divides $n$. By Item (1) of Corollary~\ref{cor:FibonacciLicheSude}, this directly implies the following.

\begin{corollary}\label{cor:a(q)odd}
Let $q>2$ be an integer  such that $a(q)$ is odd. Then $q$ satisfies the Midy property in base $\tau$.     
\end{corollary}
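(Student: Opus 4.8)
The plan is to reduce the statement directly to Item~(1) of Corollary~\ref{cor:FibonacciLicheSude}, using the characterisation of Fibonacci divisibility through the rank of apparition $a(q)$.

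First I would write $a(q) = 2n-1$, which is possible since $a(q)$ is odd, and record that $q \mid F_{a(q)} = F_{2n-1}$ by the very definition of $a(q)$. The only point that needs an argument is that $n \geq 3$, equivalently $a(q) \geq 5$. This is exactly where the hypothesis $q>2$ enters: since $F_1 = F_2 = 1$ and $F_3 = 2$, no integer $q>2$ divides $F_1$, $F_2$ or $F_3$, so $a(q) \notin \{1,2,3\}$; being odd, $a(q)$ is therefore at least $5$, i.e.\ $n \geq 3$.

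With $q \mid F_{2n-1}$ and $n \geq 3$ in hand, Item~(1) of Corollary~\ref{cor:FibonacciLicheSude} applies verbatim and yields that $q$ has the Midy property in base $\tau$, which finishes the proof.

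I do not expect any genuine obstacle here: all the work has already been done in Corollary~\ref{cor:FibonacciLicheSude} together with the cited fact (see~\cite{Halton1966}) that $m \mid F_n$ iff $a(m)\mid n$. The one spot requiring care is the small-index check guaranteeing $a(q)\geq 5$; note that it genuinely uses $q>2$, consistently with the fact that $q=2$ has $a(2)=3$ odd yet does not satisfy the Midy property in base $\tau$, because $F_3=2$ is too small for Corollary~\ref{cor:FibonacciLicheSude}(1) to apply.
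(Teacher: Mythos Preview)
Your proof is correct and follows exactly the same route as the paper, which simply notes that the claim is an immediate consequence of Item~(1) of Corollary~\ref{cor:FibonacciLicheSude} together with the Halton divisibility criterion. Your explicit verification that $a(q)\geq 5$ (so that the hypothesis $n\geq 3$ of Corollary~\ref{cor:FibonacciLicheSude}(1) is met) is a welcome bit of care that the paper leaves implicit.
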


According to our knowledge, the description of odd values in the sequence $(a(m))_{m\geq 0}$ is not known.
Among the first 70 members of the sequence $(a(m))_{m\geq 0}$
the following are odd,
\begin{gather*}
a(5)=5, \ a(10)=15, \ a(13)=7, \ a(17)=9, \ a(25)=25, \ a(26)=21,\\ a(34)=9, \ a(37)=19, \ a(50)=75, \ a(53)=27, \ a(61)=15, \ a(65)=35.       
\end{gather*}

In the following section, we inspect the  Midy property in 
base $\tau$ for all prime denominators $q$. We will see that the necessary condition given in Corollary~\ref{cor:a(q)odd} is not sufficient, since for example $a(7)=8$ and still $7$ has the Midy property (cf.\ Theorem~\ref{thm: 2Mod5}).

On the other hand,  Corollary~\ref{cor:a(q)odd} decides about the Midy property of some non-primes, such as $10,25,26,34,50,65$.


\section{The Midy property of prime numbers}\label{sec:primes}

In order to give characterisation of primes satisfying the Midy property for the golden ratio base, we will need to work in the finite field $\mathbb{Z}_q$. Let us recall some facts from finite fields. We say that $a$ is a quadratic residue $\!\!\!\mod q$, if there exist $b\in\mathbb{Z}_q$ such that $a\equiv b^2 \mod q$. The Legendre symbol is a multiplicative function defined as
$$
\Big(\frac{a}{q}\Big) = \left\{\begin{array}{rl}
1 & \text{ if  $a$ is a quadratic residue}\!\!\mod q \text{ and } a\not\equiv 0 \!\!\mod q\\
-1 & \text{ if  $a$ is a quadratic  non-residue}\!\!\mod q,\\
0 & \text{ if }a\equiv 0 \!\!\mod q.
\end{array}\right.
$$
We also use the quadratic reciprocity law. For distinct odd primes $q_1,q_2$, we have
$$
\Big(\frac{q_1}{q_2}\Big)\cdot \Big(\frac{q_2}{q_1}\Big) = (-1)^{\frac{q_1-1}{2}\frac{q_2-1}{2}}.
$$
In particular, we derive that $5$ is a quadratic residue$\mod q$ if and only if $q=5$ or $q$ is a quadratic residue$\mod 5$, which happens exactly for $q\equiv \pm1 \mod 5$.

Halton~\cite{Halton1966} derived the following result about the value $a(q)$ for prime $q$:
If $q$ is an odd prime, then $a(q)$ divides $q-\Big(\frac{5}{q}\Big)$. By the above knowledge of quadratic residues, this amounts to saying that
\begin{equation} \label{eq:halton}
\begin{aligned}
a(q) \text{ divides } q-1 \quad \text{ if }\ q\equiv \pm1 \mod 5, \\
a(q) \text{ divides } q+1 \quad \text{ if }\ q\equiv \pm2 \mod 5.     
\end{aligned}
\end{equation}

\begin{lemma}\label{lem:odmocninaMatice} 
Let $\mathbb{F}$ be a field and let $A \in \mathbb{F}^{2\times 2}$ be a matrix such that $A^2 = I$. Then 
\begin{itemize}
    \item[(i)] either  $\det A =1$ and $A=\pm I$ 
\item[(ii)] or $\det A =-1$ and the matrix $A$
is similar to 
$D=\left(\begin{smallmatrix}1 & 0\\ 0& -1\end{smallmatrix} \right)$, 
    i.e. $A = R^{-1}DR$ for some non-singular matrix $R \in \mathbb{F}^{2\times 2}$.
\end{itemize}
\end{lemma}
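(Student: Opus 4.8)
The plan is to use the fact that $A^2 = I$ means the minimal polynomial of $A$ divides $X^2 - 1 = (X-1)(X+1)$, and then split into cases according to what the minimal polynomial actually is. First I would consider the case where the minimal polynomial is $X-1$ or $X+1$: then $A = I$ or $A = -I$ respectively, and in both cases $\det A = 1$, placing us in case (i). (Note this already covers the possibility that $\mathbb{F}$ has characteristic $2$, where $1 = -1$ and $X^2 - 1 = (X-1)^2$, so the minimal polynomial is $X-1$ and $A = I$.)

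Next I would treat the remaining case, where the minimal polynomial is exactly $(X-1)(X+1)$ with $1 \neq -1$ in $\mathbb{F}$, i.e. $\operatorname{char}\mathbb{F} \neq 2$. Here the two factors are coprime, so $\mathbb{F}^2$ decomposes as the direct sum of the eigenspaces $\ker(A - I)$ and $\ker(A + I)$, both of which must be nonzero (otherwise the minimal polynomial would be a single linear factor). Since the space is $2$-dimensional, each eigenspace is $1$-dimensional; picking an eigenvector $\mathbf{r}_1$ for eigenvalue $1$ and $\mathbf{r}_2$ for eigenvalue $-1$, the matrix $R$ whose rows (or the matrix $R^{-1}$ whose columns) are $\mathbf{r}_1, \mathbf{r}_2$ is non-singular and conjugates $A$ to $D = \left(\begin{smallmatrix}1 & 0\\ 0 & -1\end{smallmatrix}\right)$. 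Then $\det A = \det D = -1$, which is consistent with case (ii), and conversely this case is forced once $\det A = -1$ since then $A \neq \pm I$.

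Alternatively — and perhaps more cleanly for a short self-contained argument — one can dispense with module-theory language: observe $(A - I)(A + I) = A^2 - I = 0$. If $A - I$ is invertible, then $A + I = 0$, so $A = -I$; if $A + I$ is invertible, then $A = I$; and if neither is invertible, then $\det(A - I) = \det(A + I) = 0$, so both $1$ and $-1$ are eigenvalues of $A$, forcing (in dimension $2$) one-dimensional eigenspaces spanned by linearly independent vectors, from which the similarity to $D$ follows. The cases $A = \pm I$ give $\det A = 1$; the diagonalizable-with-distinct-eigenvalues case gives $\det A = -1$.

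The only genuinely delicate point is making sure the characteristic-$2$ behaviour is handled correctly: when $\mathbb{F}$ has characteristic $2$ one has $A + I = A - I$, so "neither invertible" would force $A = I$ anyway, and there is no matrix with $\det A = -1 = 1$ that is not $\pm I$ — so case (ii) is simply vacuous there. I expect this to be the main thing to get right rather than a real obstacle; the rest is the standard diagonalizability criterion for an involution over a field where $2$ is invertible. I would state the argument in the invertibility-of-$A \pm I$ form to keep it elementary and avoid invoking the structure theorem.
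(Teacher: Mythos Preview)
Your ``alternatively'' argument---factor $(A-I)(A+I)=0$, split on whether $A\pm I$ is invertible, and in the remaining case read off the two distinct eigenvalues $1,-1$ to diagonalize---is exactly the paper's proof, with the same case split and the same conclusion.

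One correction to your characteristic-$2$ digression: the lemma as stated is actually \emph{false} over a field of characteristic $2$. Take $A=\left(\begin{smallmatrix}1&1\\0&1\end{smallmatrix}\right)$: then $A^2=I$ and $\det A=1$, but $A\neq\pm I$. Your claim that in characteristic $2$ the minimal polynomial must be $X-1$ (rather than $(X-1)^2$) is where the slip occurs. The paper tacitly assumes $1\neq -1$ when it speaks of ``two different eigenvalues'', and this is harmless since the lemma is only applied to $\mathbb{Z}_q$ with $q$ an odd prime; but you should drop the claim that the characteristic-$2$ case is covered.
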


\begin{proof}
    Since $A^2-I= (A-I)(A+I) = \Theta$,  
    \begin{itemize}
   \item[(i)]  either one of the matrices $A-I$
 and $A+I$ is the zero matrix $\Theta$, i.e.\ $A = \pm I$, and in this case $\det A =1$, 

 \item[(ii)] or none of the matrices is the zero matrix. In that case both $A-I$ and $A+I$ are singular, equivalently, have $0$ as an eigenvalue. This implies that the matrix  $A$ has two different eigenvalues, $1$ and $-1$, and thus is  diagonalisable. As $\det A$ is the product of eigenvalues, we have $\det A = -1$.  
 \end{itemize} 
 \vspace{-0.85cm}
 \end{proof}

Below, we will work both in the real numbers and in the finite field $\mathbb{Z}_q$.  Equality of elements $a$ and $b$   in $\R$ will be denoted  $a=b$, whereas equality in  $\mathbb{Z}_q$  we write  $a\equiv b \mod q$. Obviously  $a=b$ implies $a\equiv b \mod q$ and not vice versa.

\begin{lemma}\label{lem:1Mod5}
Let $q \in \N $ be a prime, $q\neq 5$, and let 
$C = \left(\begin{smallmatrix} 0&1\\
1&1\end{smallmatrix}\right)$  be the companion matrix of the polynomial $X^2-X-1$. If $C^{2\ell}\equiv I \mod q$ for some odd $\ell \in \N$,  then  $5$ is a quadratic residue$\mod q$ and consequently $q \equiv \pm 1 \mod 5$.
\end{lemma}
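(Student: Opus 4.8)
The plan is to analyze the matrix $C^{2\ell}\equiv I\bmod q$ using the factorization $C^{2\ell}-I=(C^\ell-I)(C^\ell+I)\equiv\Theta\bmod q$, exactly in the spirit of Lemma~\ref{lem:odmocninaMatice} applied to $A=C^\ell$ over the field $\mathbb{F}=\mathbb{Z}_q$. Since $\det C=-1$ and $\ell$ is odd, we have $\det C^\ell=(-1)^\ell=-1\not\equiv 1\bmod q$ (here $q\neq 2$, and one checks $q=2$ separately or notes $q$ prime with the claim trivial/handled), so case (i) of Lemma~\ref{lem:odmocninaMatice} is impossible and we land in case (ii): $C^\ell$ is similar over $\mathbb{Z}_q$ to $D=\left(\begin{smallmatrix}1&0\\0&-1\end{smallmatrix}\right)$. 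In particular $C^\ell$ has eigenvalue $1$ in $\mathbb{Z}_q$.

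Next I would translate "eigenvalue $1$" into an arithmetic condition. The eigenvalues of $C$ in the algebraic closure of $\mathbb{Z}_q$ are the roots of $X^2-X-1$, i.e.\ $\tau,\tau'$ with $\tau+\tau'\equiv 1$, $\tau\tau'\equiv -1$; the eigenvalues of $C^\ell$ are $\tau^\ell,\tau'^\ell$. Saying $1$ is an eigenvalue of $C^\ell$ means $\tau^\ell\equiv 1$ or $\tau'^\ell\equiv 1$ in the splitting field. I claim this forces $\tau\in\mathbb{Z}_q$, i.e.\ $X^2-X-1$ splits over $\mathbb{Z}_q$: indeed if $\tau\notin\mathbb{Z}_q$ then the Frobenius $x\mapsto x^q$ swaps $\tau$ and $\tau'$, so $\tau^q\equiv\tau'$, hence $\tau^{q+1}\equiv\tau\tau'\equiv-1$ and $\tau^{2(q+1)}\equiv 1$, giving that the multiplicative order of $\tau$ divides $2(q+1)$ but not $q+1$, so its $2$-adic valuation equals that of $2(q+1)$, which exceeds $v_2(q+1)$; on the other hand from $\tau^\ell\equiv 1$ with $\ell$ odd the order of $\tau$ is odd, a contradiction (similarly for $\tau'$ since $\tau'=-1/\tau$ has order dividing $\mathrm{lcm}$ of $2$ and $\mathrm{ord}(\tau)$). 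Therefore $X^2-X-1$ has a root in $\mathbb{Z}_q$.

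Having the quadratic split, the discriminant $1+4=5$ must be a square mod $q$: from $\tau\equiv\frac{1\pm\sqrt5}{2}$ one reads off $(2\tau-1)^2\equiv 5\bmod q$ (using $q\neq 2$ to invert $2$), so $5$ is a quadratic residue mod $q$. The final clause then follows from the quadratic reciprocity computation already recorded in the text just before Lemma~\ref{lem:1Mod5}: for $q\neq 5$, $5$ being a quadratic residue mod $q$ is equivalent to $q\equiv\pm1\bmod 5$.

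The step I expect to be the main obstacle is the middle one — ruling out the "inert" case where $X^2-X-1$ is irreducible mod $q$ — because it requires the $2$-adic order argument on $\tau$ rather than a one-line manipulation; everything before and after it is essentially bookkeeping with Lemma~\ref{lem:odmocninaMatice}, determinants, and the reciprocity fact. An alternative route to the same contradiction, which I might use instead to keep things self-contained, is purely matrix-theoretic: if $C^{2\ell}\equiv I$ with $\ell$ odd then $\det C^\ell\equiv -1$, so by Lemma~\ref{lem:odmocninaMatice}(ii) $C^\ell$ has trace $0$ over $\mathbb{Z}_q$; but $\mathrm{tr}(C^\ell)=F_{\ell-1}+F_{\ell+1}=L_\ell$, the $\ell$-th Lucas number, and the identity $L_\ell^2-5F_\ell^2=4(-1)^\ell=-4$ then gives $5F_\ell^2\equiv L_\ell^2+4\equiv 4\bmod q$, exhibiting $5$ as a quadratic residue provided $F_\ell\not\equiv 0\bmod q$ — and $F_\ell\equiv 0$ would make $C^\ell$ diagonal, contradicting $\mathrm{tr}\,C^\ell\equiv 0\neq 2F_{\ell+1}$ unless $F_{\ell+1}\equiv 0$ too, which is impossible since $\gcd(F_\ell,F_{\ell+1})=1$. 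I would likely present this Lucas-number version as it avoids passing to the algebraic closure.
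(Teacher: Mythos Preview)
Your proposal is correct. The Lucas-number route you favour is essentially the paper's argument in different clothing: the paper observes that $C^{2\ell}\equiv I$ forces $C^\ell\equiv C^{-\ell}\pmod q$, reads off from the explicit inverse that $F_{\ell+1}\equiv -F_{\ell-1}$ (i.e.\ your $L_\ell\equiv 0$) and hence $F_\ell\equiv 2F_{\ell+1}=:2a$, so that $C^\ell\equiv a\left(\begin{smallmatrix}-1&2\\2&1\end{smallmatrix}\right)$ and squaring gives $5a^2\equiv 1$; your identity $L_\ell^2-5F_\ell^2=-4$ with $L_\ell\equiv 0$ yields $5\equiv(2F_\ell^{-1})^2$, which is the same witness since $2F_\ell^{-1}\equiv a^{-1}$. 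Thus the paper does the computation bare-hands without invoking Lemma~\ref{lem:odmocninaMatice} or a named Lucas identity, while your packaging is slightly more conceptual but amounts to the same manipulation. Your Frobenius/splitting-field route is a genuinely different and more structural argument --- correct, but heavier than what is needed here. One caveat on your parenthetical: the case $q=2$ is not ``trivial/handled'' --- indeed $C^{6}\equiv I\pmod 2$ with $\ell=3$ odd, yet $2\not\equiv\pm1\pmod 5$ --- so the lemma (and the paper's own proof, at the quadratic-reciprocity step) tacitly requires $q$ odd.
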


\begin{proof} 
We use the fact that for an odd integer $\ell$, we have $\det C^\ell \equiv -1$ and we verify easily that  $C^\ell =
\left(\begin{smallmatrix}
    F_{\ell-1}& F_\ell\\
    F_{\ell}& F_{\ell+1}
\end{smallmatrix}\right) $ has the inverse   
$C^{-\ell} =
\left(\begin{smallmatrix}
    -F_{\ell-1}& F_\ell\\
    F_{\ell}&- F_{\ell+1}
\end{smallmatrix}\right) $.    From $C^{2\ell} \equiv I \mod q$ we obtain $C^\ell \equiv  C^{-\ell} \mod q$. Denoting $a:=F_{\ell+1} \equiv  -F_{\ell-1} \mod q$, we thus derive that  $F_\ell \equiv  F_{\ell+1} -F_{\ell - 1} \equiv  2a  \mod q$. 
With this, we have
$$
C^\ell \equiv \left(\begin{smallmatrix}
    -a~& 2a\\
    2a& a
\end{smallmatrix}\right)  \equiv  a\left(\begin{smallmatrix}
    -1~& 2\\
    2& 1
\end{smallmatrix}\right) \!\!\!\mod q\qquad \text{and thus} \qquad  C^{2\ell} \equiv  a^2\left(\begin{smallmatrix}
    5& 0\\
  0& 5
\end{smallmatrix}\right)\equiv  I \!\!\!\mod q.
$$
Necessarily $5 \equiv (a^{-1})^2  \mod q$. In other words, $5$ is a quadratic residue$\mod q$. By the quadratic reciprocity law, we have for the Legendre symbol $(\frac{5}{q})=(\frac{q}{5})=1$, and thus $q$ is a quadratic residue$\mod 5$. Since $q$ is a prime, necessarily $q\equiv \pm1 \mod 5$.  
\end{proof}

\begin{theorem}\label{thm: 2Mod5} 
Let $q> 2$ be a prime, $q=5$ or $q \equiv\pm 2 \mod 5$. Then $q$ has the Midy property in the golden ratio base.  
\end{theorem}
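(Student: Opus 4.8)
The plan is to invoke the sufficient condition of Theorem~\ref{t:staci}: for a prime $q>2$ it is enough to exhibit some $N>1$ with $C^N\equiv -I\mod q$, where $C=\left(\begin{smallmatrix}0&1\\1&1\end{smallmatrix}\right)$ is the companion matrix of $f(X)=X^2-X-1$. The value $q=5$ is disposed of immediately and separately: since $a(5)=5$ is odd, Corollary~\ref{cor:a(q)odd} already gives the Midy property (alternatively $C^{10}\equiv-I\mod 5$, as computed in Example~\ref{ex:q=3}). So from now on $q\equiv\pm2\mod 5$, and in particular $q\neq 5$.

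For such $q$ my claim is that $N=q+1$ works. First I would note that the quadratic reciprocity law gives $\bigl(\tfrac{5}{q}\bigr)=\bigl(\tfrac{q}{5}\bigr)=-1$, so that the discriminant $5$ of $f$ is a non-residue$\mod q$ and $f$ is irreducible over $\mathbb{Z}_q$. Let $\mathbb{K}=\mathbb{Z}_q[X]/(f)$ be the field with $q^2$ elements and $\theta\in\mathbb{K}$ a root of $f$. Then the two roots of $f$ in $\mathbb{K}$ are the Frobenius-conjugate pair $\theta$ and $\theta^q$, and their product is the constant term of $f$, namely $-1$; hence $\theta^{q+1}=\theta\cdot\theta^q=-1$ (and likewise $(\theta^q)^{q+1}=(\theta^{q+1})^q=(-1)^q=-1$, since $q$ is odd). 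Because $\theta\notin\mathbb{Z}_q$, the companion matrix $C$, whose minimal polynomial over $\mathbb{Z}_q$ is $f$, has the two distinct eigenvalues $\theta,\theta^q$ over $\mathbb{K}$ and is therefore diagonalisable there; consequently $C^{q+1}$ is diagonalisable with both eigenvalues equal to $-1$, so $C^{q+1}=-I$ over $\mathbb{K}$, and hence $C^{q+1}\equiv-I\mod q$. Since $q$ is an odd prime, $q+1>1$, and Theorem~\ref{t:staci} completes the argument.

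The one genuinely substantive point, and the place I expect the real obstacle to sit, is the \emph{sign}: the Frobenius computation must yield $C^{q+1}=-I$ and not merely $C^{q+1}=I$ or ``$C$ has multiplicative order dividing $2(q+1)$''. This is exactly the assertion $\theta^{q+1}=N_{\mathbb{K}/\mathbb{Z}_q}(\theta)=-1$, forced by the constant term of $f$; in Fibonacci terms it is the congruence $F_q\equiv\bigl(\tfrac{5}{q}\bigr)\equiv-1\mod q$, which together with Halton's bound $a(q)\mid q+1$ from~\eqref{eq:halton} (so $F_{q+1}\equiv0$, whence $C^{q+1}\equiv F_q\,I\mod q$) gives an equivalent, more elementary route to the same conclusion. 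A minor but necessary care point is that the irreducibility step requires $q\neq 5$ --- over $\mathbb{Z}_5$ the polynomial $f$ has the double root $3$ and $C$ is not diagonalisable --- which is precisely why $q=5$ is treated on its own at the outset.
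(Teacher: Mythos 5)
Your proof is correct, and it takes a genuinely different route from the paper's. The paper first invokes Halton's result~\eqref{eq:halton} to get $F_{q+1}\equiv 0\bmod q$, which only yields $C^{q+1}\equiv \pm I\bmod q$; it must then dispose of the unwanted case $C^{q+1}\equiv I$ by a separate descent argument on the order of $C$, using Lemma~\ref{lem:1Mod5} (if $C^{2\ell}\equiv I$ with $\ell$ odd then $5$ is a quadratic residue$\bmod q$) twice to force the order to be divisible by $4$, and then Lemma~\ref{lem:odmocninaMatice} to extract $-I$ as a proper power of $C$. You instead pin down the sign at once: since $5$ is a non-residue$\bmod q$, the polynomial $X^2-X-1$ is irreducible over $\mathbb{Z}_q$, its roots in $\mathbb{F}_{q^2}$ are the Frobenius pair $\theta,\theta^q$ with $\theta^{q+1}=-1$ (the constant term), and diagonalisability of $C$ over $\mathbb{F}_{q^2}$ gives $C^{q+1}\equiv -I\bmod q$ directly. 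Your argument buys three things: it shows the paper's case $C^{q+1}\equiv I$ is in fact vacuous for $q\equiv\pm2\bmod 5$; it bypasses Lemmas~\ref{lem:1Mod5} and~\ref{lem:odmocninaMatice} entirely; and it is self-contained, reproving rather than citing Halton's divisibility $F_{q+1}\equiv 0\bmod q$ (together with $F_q\equiv -1\bmod q$) as a byproduct. What the paper's approach buys in exchange is that it stays within the Fibonacci/matrix-order toolkit already set up for Theorem~\ref{thm: 1Mod5}, reusing the same two lemmas, whereas yours imports the arithmetic of $\mathbb{F}_{q^2}$. Your handling of $q=5$ separately (where $f$ has the double root $3$ and $C$ is not diagonalisable) is exactly the right care point, and the checks $N=q+1>1$ and $q$ odd are in place.
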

    
\begin{proof}  
We have shown in Example~\ref{ex:q=3} that $5$ satisfies the Midy property.

Suppose that  $q \equiv\pm 2 \mod 5$. For such primes, we have from~\eqref{eq:halton} that $a(q)$ divides $q+1$, and thus $F_{q+1} \equiv 0\mod q$. Therefore $a:= F_{q+2} \equiv F_{q} \mod q$. As $q+1$ is even,  $\det C^{q+1} \equiv F_{q+2}F_{q} \equiv 1 \equiv a^2 \mod q$. Equality $a^2 \equiv 1 \mod q$ implies $a\equiv \pm  1 \mod q$. Thus $C^{q+1} \equiv \pm I \mod q$ and in both cases $\det C^{q+1} \equiv 1 \mod q$.

If $C^{q+1} \equiv -I \mod q$, the proof is finished. Let us discuss the case $C^{q+1} \equiv I \mod q$. 
Denote by $r$ the minimal positive integer such that $C^r=I$.
Since $C^{2r}=I$ and $q\equiv \pm2\mod 5$, Lemma~\ref{lem:1Mod5} implies that $r$ is even, say $r=2r'$. As $C^r=C^{2r'}=I$, the same Lemma~\ref{lem:1Mod5} forces $r'=2r''$. Consider the matrix $A=C^{r'}=C^{2r''}$ with determinant $\det A = (\det C)^{2r''}=1$. By Lemma~\ref{lem:odmocninaMatice}, we have that $A=C^{r'}=\pm I$. Since $r'<r$, necessarily $C^{r'}=-I$.
Using the sufficient condition in Theorem~\ref{t:staci}, we conclude that $q$ has the Midy property for the golden ratio.
\end{proof}

\begin{theorem}\label{thm: 1Mod5} 
Let $q\in \N$ be a prime, $q \equiv \pm 1 \mod 5$. Write $q -1 = 2^k \ell$ for $k\in\N$ and odd $\ell\in \N$. Then $q$ has the Midy property in the golden ratio base if and only if the list of matrices $C^{2\ell}, C^{4\ell}, \ldots, C^{2^{k-1}\ell}$ contains  a matrix which equals  $ -I \!\!\mod q$. 
\end{theorem}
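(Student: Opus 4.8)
The plan is to use the equivalence furnished by Theorems~\ref{t:Nutne} and~\ref{t:staci}: for $q>2$, the number $q$ has the Midy property in base $\tau$ if and only if $C^N\equiv -I\mod q$ for some positive integer $N$ (and any such $N$ is automatically $>1$, since the off-diagonal entry $1$ of $C$ prevents $C\equiv -I\mod q$). So it suffices to pin down exactly which powers of $C$ can be $\equiv -I\mod q$, and then to read off that this happens precisely when one of the listed exponents $2^i\ell$ works.

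First I would linearise the problem over $\mathbb{Z}_q$. Since $q\equiv\pm1\mod 5$, the quadratic reciprocity computation recalled before Lemma~\ref{lem:1Mod5} gives that $5$ is a nonzero quadratic residue$\mod q$ (in particular $q\neq 5$), so $X^2-X-1$ splits over $\mathbb{Z}_q$ into two distinct linear factors. Let $\alpha,\alpha'\in\mathbb{Z}_q$ denote its roots, so that $\alpha+\alpha'\equiv 1$ and $\alpha\alpha'\equiv -1\mod q$; in particular $\alpha$ is invertible and $\alpha'\equiv -\alpha^{-1}$. Having the distinct eigenvalues $\alpha,\alpha'$ in $\mathbb{Z}_q$, the matrix $C$ is similar over $\mathbb{Z}_q$ to $\operatorname{diag}(\alpha,\alpha')$, hence $C^N\equiv -I\mod q$ if and only if $\alpha^N\equiv -1$ and $(\alpha')^N\equiv -1\mod q$. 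Comparing determinants shows that $C^N\equiv -I$ forces $(-1)^N\equiv \det(-I)=1\mod q$, i.e.\ $N$ even; and when $N$ is even, $(\alpha\alpha')^N\equiv 1$ makes $(\alpha')^N\equiv -1$ equivalent to $\alpha^N\equiv -1$. Summarising: \emph{for $N\ge 1$ one has $C^N\equiv -I\mod q$ if and only if $N$ is even and $\alpha^N\equiv -1\mod q$.}

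Next comes the $2$-adic bookkeeping, which I expect to be the technical heart of the argument. Let $o$ be the multiplicative order of $\alpha$ in $\mathbb{Z}_q^\times$, written $o=2^j m$ with $m$ odd. From $\alpha^{q-1}\equiv 1$ we get $o\mid q-1=2^k\ell$, so $j\le k$, $m\mid\ell$, and $\ell/m$ is an odd integer. As $\mathbb{Z}_q^\times$ is cyclic, $-1$ is its unique element of order $2$; hence, if $o$ is even, $\alpha^{o/2}\equiv -1$. Now $\alpha^N\equiv -1$ holds precisely when $o\mid 2N$ and $o\nmid N$; since $m$ odd together with $o\mid 2N$ forces $m\mid N$, this pins down the $2$-adic valuation $\nu_2(N)=j-1$. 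Therefore an \emph{even} such $N$ exists if and only if $j\ge 2$, and in that case $N=2^{\,j-1}\ell$ works: indeed $2^{\,j-1}\ell=(o/2)(\ell/m)$ with $\ell/m$ odd, so $\alpha^{2^{\,j-1}\ell}\equiv(\alpha^{o/2})^{\ell/m}\equiv(-1)^{\ell/m}\equiv -1$, and $2^{\,j-1}\ell$ is even. Because $2\le j\le k$ in this situation, the exponent $i:=j-1$ lies in $\{1,\dots,k-1\}$, so $C^{2^i\ell}$ is one of the listed matrices and equals $-I\mod q$. Conversely, if some listed $C^{2^i\ell}$ (with $1\le i\le k-1$) equals $-I\mod q$, then $2^i\ell$ is a positive even integer with $C^{2^i\ell}\equiv -I$, so $q$ has the Midy property. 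Combining the two directions, $q$ has the Midy property $\iff j\ge 2 \iff$ the list $C^{2\ell},C^{4\ell},\dots,C^{2^{k-1}\ell}$ contains an entry $\equiv -I\mod q$, which is the claim. (The degenerate case $k=1$, where the list is empty, is consistent: then $j\le 1<2$, so $q$ does not have the Midy property.)

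The algebraic reduction in the second paragraph to the scalar congruence $\alpha^N\equiv -1$ is routine given the earlier results; the part that will need genuine care is the valuation argument of the third paragraph — keeping straight the relations among $\nu_2(o)=j$, $\nu_2(N)$, $\nu_2(q-1)=k$ and the divisibility $m\mid\ell$ — since this is exactly what isolates the \emph{unique} candidate exponent $2^{\,j-1}\ell$ that can produce $-I$ and shows it lands in the prescribed range $\{1,\dots,k-1\}$ precisely when $j\ge 2$.
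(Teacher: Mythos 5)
Your proof is correct, and for the hard direction it takes a genuinely different route from the paper's. Both arguments start the same way: the Midy property is equivalent, via Theorems~\ref{t:Nutne} and~\ref{t:staci}, to the existence of some $N$ with $C^N\equiv -I\bmod q$ (your remark that $C\not\equiv -I$ disposes of the $N>1$ hypothesis is a point the paper glosses over), and since $5$ is a quadratic residue mod $q$ the matrix $C$ is diagonalisable over $\mathbb{Z}_q$ with distinct eigenvalues. From there the paper argues by contradiction: assuming no listed matrix equals $-I$, it descends through the chain $C^{2^{k}\ell}\equiv I,\ C^{2^{k-1}\ell},\ \dots,\ C^{2\ell}$ using Lemma~\ref{lem:odmocninaMatice} (the classification of $2\times 2$ square roots of $I$) to force all of them to equal $I$, then applies the same lemma to $C^{\ell}$ (which has determinant $-1$) to produce an eigenvalue $\lambda$ with $\lambda^{\ell}\equiv 1$, and finally contradicts $\lambda^{N}\equiv -1$ by a parity argument on the order of $\lambda$. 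You instead work directly in the cyclic group $\mathbb{Z}_q^{\times}$: writing $o=2^{j}m$ for the order of an eigenvalue $\alpha$, you characterise the exponents $N$ with $C^{N}\equiv -I$ as exactly those with $m\mid N$ and $\nu_2(N)=j-1$, conclude that an even such $N$ exists iff $j\ge 2$, and in that case exhibit $2^{j-1}\ell$ as a witness lying in the prescribed list. Your version is sharper (it pins down the unique $2$-adic valuation any witness must have and a canonical witness among the listed exponents) and avoids the matrix square-root lemma entirely; the paper's version stays at the level of $2\times 2$ matrices and reuses a lemma it needs anyway for Theorem~\ref{thm: 2Mod5}. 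The $2$-adic bookkeeping you flagged as the delicate step is carried out correctly.
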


\begin{proof} 
Since $q \equiv \pm 1 \mod 5$, by quadratic reciprocity, $5$ is a quadratic residue$\mod q$, i.e.\ there exists $b \in \mathbb{Z}_q$ such that $b^2 \equiv 5 \mod q$. Denote $\lambda_1 = 2^{-1}(1+b)$  and $\lambda_2 = 2^{-1}(1-b)$. Then $\lambda_1+\lambda_2 \equiv 1 \mod q$ and $\lambda_1 \lambda_2 = 2^{-2}(1-b^2) \equiv -1 \mod q$ and thus $\lambda_1$, $\lambda_2$ are the roots of the 
characteristic polynomial 
$$
(X - \lambda_1)(X-\lambda_2) = X^2-X-1
$$
of the matrix $C$. 
Obviously,  $b \not\equiv \pm 1, 0 \mod q$. This implies $\lambda_1\not\equiv \lambda_2$ and both $\lambda_1$ and $  \lambda_2$ belong to the multiplicative group $\mathbb{Z}_q \setminus \{0\} $.  The order of the elements of this group divides $q-1$, and therefore  $\lambda_1^{q-1} \equiv \lambda_2^{q-1} \equiv 1 \mod q$. Hence there exists a non-singular matrix $R \in \mathbb{Z}_q^{2\times 2}$ such that  
$$
C \equiv  R^{-1}\left(\begin{smallmatrix}
    \lambda_1& 0\\
    0& \lambda_2
\end{smallmatrix}\right)R \mod q \qquad \text{ and }  \qquad C^{q-1} \equiv  I \mod q.
$$
In order to prove the theorem, we show two implications.

The implication $(\Leftarrow)$ follows by Theorem \ref{t:staci}. For the opposite direction
$(\Rightarrow)$ we proceed by contradiction. Assume that 
$q$ satisfies the Midy property, i.e.\  by Theorem \ref{t:Nutne},  there exists  $N \in \N$ such that $C^N \equiv  -I \mod q$. In particular, for both eigenvalues $\lambda_1,\lambda_2$ of $C$ it holds that $\lambda_1^N\equiv \lambda_2^N\equiv  -1 \mod q$.

In the same time, suppose that 
no matrix in the list $C^{2\ell}, \dots, C^{2^k\ell}$ 
is equal to $-I \mod q$. Since all matrices in the list are powers of $C^2$, their determinant is equal to $1 \mod q$. Moreover, $C^{2^k\ell}=C^{q-1}\equiv I\mod q$. 
Lemma~\ref{lem:odmocninaMatice} implies that all matrices in the list are equal to $I \mod q$. As
$C^{2\ell}\equiv I \mod q$ with $\ell$ odd and $\det C^{\ell}\equiv -1$, Lemma~\ref{lem:odmocninaMatice} forces that  
$C^\ell$ is similar to the matrix 
$\left( \begin{smallmatrix}
1& 0\\
0& -1
\end{smallmatrix}\right)$. In particular, for one of the eigenvalues of the matrix $C$, say $\lambda_1$, it holds that   
$$
\lambda_1^\ell \equiv  1 \!\!\!\mod q \quad \text{and} \quad \lambda_1^N \equiv - 1 \!\!\!\mod q.
$$  
Let $r$ be the order of $\lambda_1$ in the multiplicative group of the field $\mathbb{Z}_q$. Necessarily $r$ divides $\ell$, in particular, $r$ is odd. On the other hand $\lambda_1^{2N} \equiv  1 \mod q$, and thus $r$ divides $2N$, too. The fact  $\det C = -1$  and $\det (-I) = 1$ forces $N$ to be even.  Thus the odd number $r$ divides $N/2$. We derive that  $\lambda_1^{N/2} \equiv  1 \mod q$ and  $\lambda_1^N \equiv  1 \mod q$, as well. This is a contradiction.      
\end{proof}

If $\frac{q-1}{2}$ is odd, then the list of matrices in the previous theorem is empty, and it cannot contain $-I$. By Theorem~\ref{thm: 2Mod5} we obtain the following Corollary.

\begin{corollary}\label{cor:Mod20}  
Let $q \in \N$ be a  prime such that  $q \equiv -1 \mod 20$ or $q \equiv  11 \mod 20$. Then $q$ does not have the Midy property. Consequently, no Fibonacci number $F_{2n-1}$ with $n \geq 3$ is divisible  by such a prime $q$.  
\end{corollary}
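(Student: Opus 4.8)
The plan is to reduce the claim to Theorem~\ref{thm: 1Mod5}, in which the relevant list of matrices turns out to be empty, and then to read off the Fibonacci consequence from Item~(1) of Corollary~\ref{cor:FibonacciLicheSude}.

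First I would rewrite the hypothesis modulo $20$ as a pair of congruences, modulo $4$ and modulo $5$. By the Chinese remainder theorem, $q\equiv -1\mod 20$ is equivalent to ``$q\equiv 3\mod 4$ and $q\equiv -1\mod 5$'', while $q\equiv 11\mod 20$ is equivalent to ``$q\equiv 3\mod 4$ and $q\equiv 1\mod 5$''. In either case $q\equiv \pm 1\mod 5$, so Theorem~\ref{thm: 1Mod5} is applicable; moreover $q-1\equiv 2\mod 4$, hence writing $q-1=2^k\ell$ with $\ell$ odd forces $k=1$. (Every prime in these two residue classes is at least $11$, so the exceptional value $q=5$ of the theory never occurs.)

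Next I would apply Theorem~\ref{thm: 1Mod5} with $k=1$: the list $C^{2\ell},C^{4\ell},\dots,C^{2^{k-1}\ell}$ runs over the matrices $C^{2^j\ell}$ with $1\le j\le k-1=0$ and is therefore empty, so in particular it contains no matrix congruent to $-I\mod q$. By the theorem, $q$ does not have the Midy property in base $\tau$. For the closing sentence I would then argue by contraposition: if such a prime $q$ divided $F_{2n-1}$ for some $n\ge 3$, then Item~(1) of Corollary~\ref{cor:FibonacciLicheSude} would give $q$ the Midy property in base $\tau$, contradicting what has just been shown; hence no $F_{2n-1}$ with $n\ge 3$ is divisible by $q$. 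The whole argument is essentially bookkeeping, and the only point that needs a little care is reading off the $2$-adic valuation of $q-1$ correctly so that the list in Theorem~\ref{thm: 1Mod5} is seen to be empty — which is exactly the observation recorded in the remark preceding the corollary.
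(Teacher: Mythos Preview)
Your proposal is correct and follows essentially the same route as the paper: the congruence classes modulo $20$ are unpacked into $q\equiv\pm1\bmod5$ together with $q\equiv3\bmod4$, so that Theorem~\ref{thm: 1Mod5} applies with $k=1$ and an empty list, yielding the failure of the Midy property; the Fibonacci divisibility statement is then the contrapositive of Item~(1) of Corollary~\ref{cor:FibonacciLicheSude}. The paper's own argument is the one-line observation preceding the corollary that $\tfrac{q-1}{2}$ odd makes the list empty, which is exactly what you spell out.
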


\begin{remark}
  If  $q$ is prime,  $q \equiv \pm 1 \mod 5$,   and $\frac{q-1}{2}$ is even, then necessarily  $q \equiv 1 \mod 20$ or $q \equiv  9 \mod 20$. Among such primes some have the  Midy property in base $\tau$ and some do not have. For example,  
\begin{itemize}
    \item $41 \equiv 101 \equiv 1 \mod 20$. The prime $41$ has the Midy property, whereas $101$ has not.
    \item $109 \equiv 29 \equiv  9\mod 20$.  While $109$ has the Midy property in base $\tau$, the prime  $29$ has not. 
\end{itemize}
\end{remark}

\begin{remark} Mersenne primes are defined as primes of the form $q =2^s -1$, which forces the exponent $s$ to be also prime. Of course, not all numbers of the form $2^s-1$ are prime even if $s$ is prime. Suppose that $q=2^s-1$ is a Mersenne prime. We can derive the following conclusions about the Midy property of $q$ in base $\tau$:
\begin{itemize}
    \item  if  $s \equiv  3 \mod 4$, then $q = 2^{4k+3}-1 =8 (2^4)^k - 1 \equiv 2 \mod 5$, and by Theorem~\ref{thm: 2Mod5}, the Mersenne prime $q = 2^s-1$ has the Midy property in base $\tau$. 

    \item if  $s \equiv  1 \mod 4$, then $q = 2^{4k+1}-1 = 2 (2^4)^k - 1 \equiv 1 \mod 5$, and $q = -1 \mod 4$. This forces $q = -1 \mod 20$, and by Corollary~\ref{cor:Mod20}  the Mersenne prime $q=2^s-1$ does not have the Midy property in base $\tau$. 
\end{itemize}
Today (April 2024), 51 Mersenne primes are known. Precisely 19 of them satisfy the Midy property in base $\tau$.

\bigskip

\noindent Fermat primes are primes of the form $f_n = 2^{2^n}+1$,  where $n \in \N$. 
\begin{itemize}
\item $f_0 =3$ and $f_1 = 5 =F_5$ have the Midy property by Example~\ref{ex:q=3}.

\item If $n \geq 2$, then $f_n = (2^4)^{2^{n-2}}+1 \equiv 2 \mod 5$, and by Theorem~\ref{thm: 2Mod5}, the Fermat prime $f_n$ has the Midy property.  
\end{itemize}
Thus every Fermat primes satisfies the Midy property in base $\tau$. Unfortunately, as of today, only 5 Fermat primes are known, namely $f_n$, for $n =0,1,2,3,4$.
\end{remark}

\section{Comments}

Let us discuss  possible generalizations of our result to other bases. 

\begin{itemize}
    \item  Theorem \ref{t:staci} is stated for the golden ratio base.  
    It is likely that one can generalize it for all bases $\beta$ which are quadratic Pisot units with norm equal to $-1$, i.e.\ roots of polynomials $f(X)=X^2-mX-1$, $m\geq 1$. However, much less is known about divisibility properties of sequences defined by the linear recurrence with characteristic polynomial $f$. Some results of this kind can be found in~\cite{Renault2013}.

    On the other hand, no $q$ can satisfy the  Midy property in base $\beta$ which is a quadratic Pisot unit with norm equal to $+1$. For, it is known~\cite{HI97} that no rational number in the interval $(0,1)$ has purely periodic $\beta$-expansion.
    
    \item  The original Midy's theorem considers integer bases, which are naturally not algebraic units.  
    One can observe  $q\in \N $ with the Midy property even in non-integer 
    bases $\beta >1$ that are non-units. According to Akiyama~\cite{A98}, if the base $\beta$ is chosen to be the quadratic Pisot number with minimal polynomial $X^2-mX-n$, $m\geq n$, then every reduced fraction whose denominator is coprime to $N(\beta)=n$ has a purely periodic $\beta$-expansion. For example, let $\beta=1+\sqrt3$, i.e.\ $\beta$ is a root of
    $X^2-2X-2$. Then  
    $$
    (\tfrac45)_\beta = 0\dotp(201100100121011021112000)^\omega.
    $$
 For the two halves of the period, we have
    $$
    201100100121 + 011021112000 = 212121212121
    $$
    which is a prefix of $d^*_\beta(1) = (21)^\omega$.
    
\item  As it was already mentioned in Remark~\ref{rem:Tribonacci}, in the Tribonacci base, i.e.,  when $\beta>1$ is a root of $X^3 -X^2-X-1$, no denominator  $q$ has the Midy property.  
On the other hand, we have tested fractions in base $\beta>1$ which is a root of $X^4 - X^3 - X^2 -X -1$ and we have found fractions  $\tfrac{p}{q}$ such that 
\medskip

\centerline{$T^N(\tfrac{p}{q}) = \tfrac{q-p}{q} \ \ \ \text{and} \ \ \  T^N(\tfrac{q-p}{q}) = \tfrac{p}{q}\,,$}
\medskip
 \noindent which by Lemma~\ref{lem:IFF} implies the Midy property. 
For instance, 
 for $\tfrac{p}{q} = \tfrac{1}{5}$  the previous equalities are satisfied with $N =156$; 
for $\tfrac{p}{q}=\tfrac{1}{10}$   and $\tfrac{p}{q}=\tfrac{1}{25}$ with  $N =780$, for
$\tfrac{p}{q}=\tfrac{1}{17}$ with $N =2456$. 




\end{itemize}

{\small
    
}

\EditInfo{January 9, 2024}{April 25, 2024}{Emilie Charlier, Julien Leroy and Michel Rigo}

\end{document}